\newtheorem{thm}{Theorem}
\newtheorem{cor}[thm]{Corollary}
\newtheorem{lem}[thm]{Lemma}
\newtheorem{clm}[thm]{Claim}
\newtheorem{comp}[thm]{Complement}
\theoremstyle{definition}
\newtheorem{dfn}[thm]{Definition}
\newtheorem{rem}[thm]{Remark}
\newtheorem{ex}[thm]{Example}
\newtheorem{ques}[thm]{Question}
\newtheorem{conj}[thm]{Conjecture}
\numberwithin{thm}{section}
\numberwithin{equation}{section}
\DeclareMathOperator{\diam}{diam}
\newcommand{\step}[1]{\medskip\noindent\textit{#1.}}
\title{Alexandrov spaces are CS sets}
\author[T. Fujioka]{Tadashi Fujioka}
\address{Department of Mathematics, Kyoto University, Kitashirakawa, Kyoto 606-8502, Japan}
\email{fujioka.tadashi.5t@kyoto-u.ac.jp, tfujioka210@gmail.com}
\date{\today}
\subjclass[2020]{53C20, 53C23, 57N16, 57N80, 57P99, 57Q99, 52B70}
\keywords{Alexandrov spaces, stratification, CS sets, extremal subsets, polyhedral spaces, PL manifolds, sphere theorem}
\thanks{Supported by JSPS KAKENHI Grant Number 22KJ2099}
\begin{document}

\begin{abstract}
We prove that the extremal stratification of an Alexandrov space introduced by Perelman--Petrunin is a CS stratification in the sense of Siebenmann.
We also show that every space of directions of an Alexandrov space without proper extremal subsets is homeomorphic to a sphere.
In the polyhedral case, the same holds for every iterated space of directions.
\end{abstract}

\maketitle

\section{Introduction}

\subsection{Main result}

The main purpose of this paper is to prove the following.

\begin{thm}\label{thm:main}
The extremal stratification of an Alexandrov space introduced by Perelman--Petrunin is a CS stratification in the sense of Siebenmann.
\end{thm}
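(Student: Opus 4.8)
The plan is to induct on $n=\dim A$ and to reduce the theorem to a local statement: every point $p\in A$ should have a neighborhood which, with the induced extremal filtration, is a CS set conelike at $p$ in Siebenmann's sense. Since the open cone on a compact CS set is a CS set, and a product of a CS set with a Euclidean space is a CS set, it suffices to produce a stratified homeomorphism from a neighborhood of $p$ onto $\mathbb{R}^{i}\times\mathring{c}L$, where $i$ is the dimension of the stratum through $p$, $L$ is a compact CS set of dimension $n-i-1$ recording the deeper strata, and $\mathbb{R}^{i}\times\{\mathrm{vertex}\}$ is the $i$-skeleton near $p$.

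The geometric input is Perelman's local conelike structure: a neighborhood of $p$ is homeomorphic to the tangent cone $T_{p}A=\mathring{c}(\Sigma_{p})$, with $\Sigma_{p}$ a compact $(n-1)$-dimensional Alexandrov space of curvature $\ge 1$. The decisive step is to make this --- and, more generally, Perelman's stability theorem --- \emph{stratified}: the homeomorphism should carry each extremal subset $E$ meeting the neighborhood onto the cone over the associated extremal subset $\Sigma_{p}E\subseteq\Sigma_{p}$, hence carry the extremal filtration near $p$ onto the cone over the extremal filtration of $\Sigma_{p}$. This means revisiting the construction of Perelman's homeomorphisms (say, in Kapovitch's framework of admissible maps and strained points) while keeping control of the extremal structure throughout, using that extremal subsets persist under Gromov--Hausdorff limits and blow-ups and that the gradient flows used in the construction respect extremal subsets. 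Granting this, the inductive hypothesis applied to $\Sigma_{p}$ makes $\mathring{c}(\Sigma_{p})$ a CS set, so a neighborhood of $p$ is a CS set as an abstract stratified space.

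It remains to split off $\mathbb{R}^{i}$ and align it with the stratum through $p$. Let $E$ be the minimal extremal subset containing $p$; then $p$ lies in $E^{\circ}=E\setminus\bigcup\{E'\subsetneq E: E'\text{ extremal}\}$, a topological manifold by Perelman--Petrunin, of dimension $i:=\dim E$, and $\Sigma_{p}E$ is then an extremal subset of $\Sigma_{p}$ with no proper extremal subsets, hence homeomorphic to a sphere $S^{i-1}$ by the second main result of this paper. Thus the $i$-skeleton of the neighborhood is $\mathring{c}(\Sigma_{p}E)\cong\mathbb{R}^{i}$ and the lower skeleta are empty, so $p$ lies in the $i$-stratum, and what is needed is that $\mathring{c}(\Sigma_{p})$ splits, compatibly with the filtration, as $\mathbb{R}^{i}\times\mathring{c}L$ extending $\mathring{c}(\Sigma_{p}E)\cong\mathbb{R}^{i}$ --- equivalently, that $\Sigma_{p}\cong\Sigma_{p}E*L$ with $L$ a compact CS set of dimension $n-i-1$ carrying the remaining stratification. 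One obtains this from the gradient flow of $\dist_{\Sigma_{p}E}$ on $\Sigma_{p}$, which by extremality is regular off $\Sigma_{p}E$ and off its polar set: Perelman's fibration theorem exhibits a neighborhood of $\Sigma_{p}E$ as a cone bundle over $S^{i-1}$ with fiber $\mathring{c}L$, triviality over the sphere (the cases $i\le 2$ being handled directly) makes it a product, and the complementary region deformation retracts onto the polar set $L$, giving the join decomposition; here $L$ is the space of directions perpendicular to $\Sigma_{p}E$, identified with a lower-dimensional Alexandrov space, so the inductive hypothesis applies to it. The same argument can be carried out one dimension up instead, as a tubular neighborhood theorem for $E^{\circ}$ in $A$ built from the gradient flow of $\dist_{E}$.

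I expect the main obstacle to be the stratified stability and fibration theorem: plain stability only produces homeomorphisms, and enforcing compatibility with the extremal filtration at all levels simultaneously --- including the uniformity needed to trivialize the relevant bundles --- requires reworking Perelman's arguments with real care. A secondary difficulty is the final identification: extremal subsets are not in general totally geodesic (by the appendix, $E^{\circ}$ need not even be an Alexandrov space in its induced intrinsic metric), so passing from the mere existence of the extremal sphere $\Sigma_{p}E$ to a genuine join splitting of $\Sigma_{p}$, and recognizing its transverse link as an Alexandrov space, must exploit the rigidity forced by $\Sigma_{p}E$ having no proper extremal subsets.
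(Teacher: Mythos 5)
Your outline correctly identifies the geometric core (a stratified version of Perelman's local structure theorem, i.e.\ Kapovitch's relative stability/fibration theorem, Complement \ref{comp:cn}), but the route you take through it has a genuine gap, and it is not how the paper argues. You work at an \emph{arbitrary} point $p\in\mathring E$ and want to deduce that $\Sigma_pE\cong S^{i-1}$ ``by the second main result of this paper.'' Theorem \ref{thm:link} does not apply here: it concerns the spaces of directions of an Alexandrov space $X$ that has no proper extremal subsets, whereas $\Sigma_pE$ is merely an extremal subset of $\Sigma_p$ (not itself an Alexandrov space --- indeed the Appendix shows extremal subsets need not carry any Alexandrov structure), and even if it did apply, its conclusion would concern the spaces of directions \emph{of} $\Sigma_pE$, not $\Sigma_pE$ itself. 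In fact the assertion is not obviously true: all one knows a priori at a general $p\in\mathring E$ is that $\mathring c(\Sigma_pE)\cong\mathbb R^i$, and this does not force $\Sigma_pE\cong S^{i-1}$ (think of a suspension of a homology sphere). Your subsequent join decomposition $\Sigma_p\cong\Sigma_pE*L$ rests on this, and additionally requires trivializing a cone bundle over $\Sigma_pE\cong S^{i-1}$, which is not automatic for $i\ge 3$ and which you only claim to handle ``directly'' for $i\le 2$.

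The paper's proof sidesteps all of this. It starts at a \emph{regular} point $p$ of $E$ (where the tangent cone of $E$ is Euclidean, so there is a regular distance map $f:K\to\mathbb R^m$ coming from $m$ points of $E$, and $m=\dim E$ is maximal), invokes the canonical neighborhood Theorem \ref{thm:cn} --- which produces the product $B^m\times C(L)$ directly from a map to a ball, with no bundle-over-a-sphere to trivialize --- and makes it stratum-preserving via Complement \ref{comp:cn}. It then \emph{propagates}: the set $F$ of points of $E$ lacking a neighborhood isomorphic to $B^m\times C(L)$ is shown, via Lemma \ref{lem:ext} and translations along $g_1^{-1}(0)$ inside canonical neighborhoods, to be an extremal subset; since the regular point $p\notin F$ and $E$ is primitive, $F\subset E\setminus\mathring E$, giving the conclusion with a single link $L$ for all of $\mathring E$. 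Your proposal has no analogue of this propagation step, which is both what avoids the need to understand $\Sigma_pE$ at non-regular points and what establishes local constancy of the link. The stratified stability theorem you flag as the ``main obstacle'' is already available in the literature (Kapovitch \cite[\S9]{K:stab}), so that part of your worry is unfounded; the real obstacle is the one above.
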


An \textit{Alexandrov space} is a metric space with a lower curvature bound introduced by Burago--Gromov--Perelman \cite{BGP}.
They naturally arise as Gromov--Hausdorff limits of Riemannian manifolds with sectional curvature bounded below or quotient spaces of Riemannian manifolds by isometric group actions.
In general, a \textit{stratification} is a well-behaved decomposition of a topological space into topological manifolds of different dimensions.

From a geometrical point of view, the proof of Theorem \ref{thm:main} is merely a combination of known technical results and provides nothing new.
Nevertheless, the author believes it is worth publishing because it connects two concepts in different fields, yet even a statement cannot be found in the literature.
Another motivation comes from the collapsing theory of Riemannian manifolds, where the extremal stratification of limit Alexandrov spaces plays an essential role (\cite{Per:col}, \cite{F:good}, \cite{F:eu}; especially in the sheaf-theoretic conjectures by Alesker \cite{A:conj}).

For this reason, we begin by reviewing what is known about the stratification of Alexandrov spaces.
There are four types of stratification appearing in this paper --- extremal, MCS, CS, and WCS stratifications with the following relationships:
\[
\begin{CD}
\text{extremal} @>{(2)}>> \text{MCS}\;\\
@V{(1)}VV @VV{(4)}V \\
\text{CS} @>>{(3)}> \text{WCS}.
\end{CD}
\]
The precise meaning of each arrow will be explained below.
Here we focus only on the histories of the four notions and defer the definitions to Section \ref{sec:pre}.

The notion of a stratification of an Alexandrov space was first introduced by Perelman \cite{Per:alex}, \cite{Per:mor}, as a result (actually a part) of his Morse theory for distance functions on Alexandrov spaces.
He proved that any Alexandrov space is an \textit{MCS space} (= multiple conical singularity), and in particular that it is stratified into topological manifolds.
Note that the MCS stratification is purely topological.

At the same time as \cite{Per:mor}, Perelman--Petrunin \cite{PP:ext} introduced the notion of an \textit{extremal subset} --- a singular set of an Alexandrov space defined by a purely geometrical condition.
By extending the Morse theory to extremal subsets, they proved that any Alexandrov space is, in a certain sense, uniquely stratified into its own extremal subsets.
Here we refer to it as the \textit{extremal stratification}.
The extremal stratification is finer than the MCS stratification (Arrow (2)): the former reflects not only the topological singularity but also the geometrical singularity of an Alexandrov space.
For example, the extremal stratification of a squire consists of the four vertices, the interiors of the four edges, and the interior of the squire itself, whereas the MCS stratification consists only of the boundary and the interior of the squire.

The above-mentioned Morse theory relies heavily on the work of Siebenmann \cite{Si} on the deformation of homeomorphisms on stratified sets.
In \cite{Si}, Siebenmann introduced two notions of stratified sets: one is a \textit{CS set} (= cone-like stratified) and the other is a \textit{WCS set} (= weakly cone-like stratified).
As the names suggest, CS sets are WCS sets (Arrow (3)).
From our point of view, the main difference of these two is whether the local normal trivialization preserves the stratification or not (see Remark \ref{rem:wcs}).
Still, the fundamental deformation theorem of Siebenmann works in the weaker setting of WCS sets.

In \cite[p.6]{Per:alex}, \cite[p.207]{Per:mor}, Perelman only stated that the MCS stratification of an Alexandrov space is a WCS stratification (Arrow (4)), which was enough for him to develop the Morse theory with the help of Siebenmann's deformation theorem for WCS sets.
However, there was no mention of the CS structure in \cite{Per:alex}, \cite{Per:mor}, even in \cite{PP:ext}, and as already remarked, since then there have been no papers about it (except for a closely related result of Kapovitch \cite[\S9]{K:stab} mentioned below).
This is why we consider this problem, and the answer is, as stated in Theorem \ref{thm:main}, given by the extremal stratification of Perelman--Petrunin (Arrow (1)).

In fact, the proof of Theorem \ref{thm:main} is almost the same as the original proof of the existence of the extremal stratification by Perelman--Petrunin  \cite[3.8, 1.2]{PP:ext}.
The only difference is that we make use of the relative fibration theorem with respect to extremal subsets developed later by Kapovitch \cite[\S9]{K:stab}, instead of the absolute one given in \cite[\S2]{PP:ext}.

\subsection{Further results}

Now we discuss further regularity results for the extremal stratification of Alexandrov spaces, which is a new contribution of this paper.

The extremal stratification theory implies that if an Alexandrov space contains no proper extremal subsets, then it must be a topological manifold.
However, even if an Alexandrov space is a topological manifold, it does not necessarily mean that each link (i.e., the space of directions) is a topological sphere.
See Example \ref{ex:ds} based on the double suspension theorem of Cannon \cite{Ca} and Edwards \cite{E:icm}, \cite{E:susp}.
The infinitesimal characterization of Alexandrov topological manifolds by Wu \cite{W:alex} (cf.\ \cite{W:ed}) only shows that the link is a homology manifold with the homotopy type of a sphere.
Regarding this problem, we prove the following.

\begin{thm}\label{thm:link}
Every space of directions of an Alexandrov space without proper extremal subsets is homeomorphic to a sphere.
\end{thm}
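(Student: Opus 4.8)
The plan is to argue by induction on $n=\dim X$, assembling four ingredients: the extremal stratification theory of Perelman--Petrunin (which, as recalled above, already tells us that an Alexandrov space without proper extremal subsets is a topological manifold), Wu's infinitesimal characterization \cite{W} (which tells us that the spaces of directions of a topological Alexandrov manifold are homology manifolds with the homotopy type of spheres), the local conical structure of Alexandrov spaces (Perelman \cite{Per:mor}, or Theorem~\ref{thm:main}), and the topological Poincar\'e conjecture in all dimensions. The cases $n\le 1$ are immediate: such an $X$ has empty boundary (the boundary of an Alexandrov space is always extremal, hence cannot be a proper extremal subset), so it is a circle or a line, and every space of directions is $S^{0}$.

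Now let $n\ge 2$, assume the theorem in dimensions $<n$, fix $p\in X$, and put $\Sigma:=\Sigma_{p}X$, an $(n-1)$-dimensional Alexandrov space of curvature $\ge 1$. Since $X$ has no proper extremal subsets it is a topological $n$-manifold without boundary, so by Wu's theorem $\Sigma$ is a homology $(n-1)$-manifold with the homotopy type of $S^{n-1}$. The key step is to observe that $\Sigma$ \emph{itself} has no proper extremal subsets: as $X$ coincides with its own main stratum, $p$ lies in the main stratum of $X$; by the local structure of the extremal stratification the germ of $X$ with its extremal subsets at $p$ agrees with the germ of the tangent cone $K(\Sigma)$ at its vertex $o$, so $o$ lies in the main stratum of $K(\Sigma)$; but a proper extremal subset $E\subsetneq\Sigma$ would produce the proper extremal subset $K(E)\ni o$ of $K(\Sigma)$, a contradiction. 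Applying the induction hypothesis to $\Sigma$, every space of directions of $\Sigma$ is homeomorphic to $S^{n-2}$; hence, by the conical structure, every point of $\Sigma$ has a neighbourhood homeomorphic to the open cone $K(S^{n-2})\cong\mathbb{R}^{n-1}$, so $\Sigma$ is in fact a closed topological $(n-1)$-manifold. Being a closed topological manifold with the homotopy type of $S^{n-1}$, it is homeomorphic to $S^{n-1}$ by the Poincar\'e conjecture --- classical for $n-1\le 2$, Perelman for $n-1=3$, Freedman for $n-1=4$, Smale and Newman for $n-1\ge 5$ --- which completes the induction.

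I expect the main obstacle to be the third step above, namely the inheritance of the ``no proper extremal subsets'' property by $\Sigma_{p}X$; everything else is a formal assembly of known results. It is also worth pinning down \emph{why} the induction cannot be dispensed with: Wu's theorem only presents $\Sigma_{p}X$ as a homology manifold with the homotopy type of a sphere, and such a space need not be a topological manifold --- the single suspension of the Poincar\'e homology $3$-sphere is the standard example, and this is the same circle of phenomena as the double suspension theorem mentioned in the introduction. The induction through the extremal-subset-free condition is precisely what promotes ``homology manifold with the homotopy type of a sphere'' to ``topological manifold'', after which the Poincar\'e conjecture applies; note in particular that the case $n-1=4$ requires Freedman's theorem. (If one prefers not to invoke Wu, one can instead read off $\widetilde{H}_{*}(\Sigma_{p}X)$ from the fact that $X$ is a manifold at $p$ together with excision and the conical structure, and check $\pi_{1}(\Sigma_{p}X)=1$ by comparing nested conical and Euclidean neighbourhoods of $p$; Wu's statement is simply the most economical reference.)
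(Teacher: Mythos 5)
Your induction strategy is clean, but the decisive step---that an Alexandrov space without proper extremal subsets has spaces of directions with no proper extremal subsets---is not a consequence of the results you cite, and in fact it cannot be, for the following reason. If this inheritance held, then combined with the statement you are trying to prove it would immediately answer Question~\ref{ques:link} (and the further Question after Proposition~\ref{prop:hom}) in the affirmative: ``no proper extremal subsets'' would propagate to all iterated spaces of directions, and the theorem applied at each level would make every iterated link a sphere. The paper explicitly poses this as an open problem and records only partial sufficient conditions (noncollapsing limits \cite{K:reg}, or all links having radius $>\pi/2$). So either the inheritance is false, or it requires an argument well beyond what is available; your germ argument does not supply it, because the relative stability/canonical neighborhood machinery (Complement~\ref{comp:cn}, Kapovitch \cite[\S9]{K:stab}) only produces homeomorphisms respecting \emph{specified} extremal subsets that persist in the blow-up, and gives no surjectivity: new extremal subsets can appear in a Gromov--Hausdorff limit that are not limits of extremal subsets from the approximating spaces (think of a sequence of ``fat'' footballs degenerating to one with $\diam$ of the link exactly $\pi/2$, where the poles become extremal only in the limit). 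Note also that even after $\Sigma_p$ is known to be a topological sphere, it can still carry proper extremal subsets (thin footballs, or the subsuspension in Example~\ref{ex:ds}), so ``sphere'' does not entail ``extremal-subset-free.''

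The paper's actual proof avoids this entirely and does not induct on dimension. It proves the stronger Theorem~\ref{thm:link'}: for any Alexandrov topological manifold $X$, the set $E$ of points with nonspherical link is a one-dimensional extremal subset if nonempty. This is established by an infinitesimal characterization of extremality (Perelman--Petrunin \cite[1.4]{PP:ext}) together with a concrete identification (Claim~\ref{clm:link}) of $\Sigma_p E$ with the set of nonmanifold points of $\Sigma_p$, using strainers, the suspension and fibration theorems, the stability theorem, and a $1$-LCC criterion from geometric topology (\cite{CBL}, \cite{BDVW}). Then ``$X$ has no proper extremal subsets'' forces $E=\emptyset$, so every $\Sigma_p$ is a manifold, and (as you correctly say) Wu plus the Poincar\'e conjecture finishes. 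The ingredients you invoke---Wu's theorem, the local conical structure, and Poincar\'e in all dimensions including Freedman's case---are all used in the paper, but the missing piece is not a ``formal assembly of known results''; it is the hard geometric content of Step~2 of the paper's proof, which you have implicitly replaced with an unproven (and likely open or false) assertion.
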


We prove the following more rigid result in Theorem \ref{thm:link'}: the set of points with nonspherical links in an Alexandrov topological manifold forms an extremal subset.
As an application, we obtain a new sphere theorem for the link of an Alexandrov homology manifold with positive curvature (Corollary \ref{cor:link}).
This provides a sufficient, and in some sense optimal, condition for ruling out the double suspension over a nonsimply-connected homology sphere.

Theorem \ref{thm:link} naturally leads us to the conjecture that any Alexandrov space without proper extremal subsets would admit some kind of PL structure.
Let us formulate this conjecture as follows.

\begin{conj}\label{conj:pl}
Suppose an Alexandrov space contains no proper extremal subsets.
Then every iterated space of directions is homeomorphic to a sphere.
\end{conj}

Here an \textit{iterated space of directions} refers to a space of directions of a space of directions of ...\ of a space of directions.
The conclusion of Conjecture \ref{conj:pl} holds for any noncollapsing limit of Riemannian manifolds with sectional curvature bounded below (\cite[1.4]{K:reg}), while the limit space may have proper extremal subsets.
Another sufficient condition for Conjecture \ref{conj:pl} is that every space of directions has \textit{radius} greater than $\pi/2$ (\cite[\S3]{Pet:app}, cf.\ \cite{GP:rad}).
This assumption means that every direction has a $\pi/2$-apart direction, which prevents the existence of proper extremal subsets (\cite[1.4.1]{PP:ext}).
Compare with the corresponding results \cite[1.3, 1.5]{LN:top} in the context of geodesically complete spaces with curvature bounded above.

We shall verify Conjecture \ref{conj:pl} in the polyhedral case.
See Section \ref{sec:poly} for the definition of a polyhedral space.
In this case, the conclusion of Conjecture \ref{conj:pl} is equivalent to the condition that the link around every simplex is homeomorphic to a sphere (Lemma \ref{lem:pl}).

\begin{thm}\label{thm:poly}
For polyhedral Alexandrov spaces, Conjecture \ref{conj:pl} is true.
\end{thm}

\begin{rem}
In the polyhedral case, it would be more natural to expect a PL structure, i.e., the link around every simplex is PL homeomorphic to a sphere.
\end{rem}

\begin{rem}
To the best of the author's knowledge, it is an open problem whether an Alexandrov space admits a triangulation, even when there are no proper extremal subsets.
Note that there exists a topological manifold not admitting any triangulation (see \cite{Ma}).
\end{rem}

\begin{rem}
Although the above theorems only deal with the top stratum of an Alexandrov space, the same results could be extended to each extremal stratum of an Alexandrov space.
However, we will not treat this generalization in this paper, as our primary goal is to prove Conjecture \ref{conj:pl}.
\end{rem}

\step{Organization}
In Section \ref{sec:pre}, we recall background material, including the definitions of the four stratifications mentioned in the introduction.
In Sections \ref{sec:prf}, \ref{sec:link}, and \ref{sec:poly}, we prove Theorems \ref{thm:main}, \ref{thm:link}, and \ref{thm:poly}, respectively.
Although the proof of Theorem \ref{thm:main} is based on several technical results, all we have to do is simply combine them.
For this reason, Sections \ref{sec:pre} and \ref{sec:prf} are rather expository.
On the other hand, Section \ref{sec:link} is technical and demands more familiarity with Alexandrov geometry.
Section \ref{sec:poly}, dealing with polyhedral spaces, requires little knowledge of Alexandrov geometry.
In Appendix \ref{sec:app}, we give an example of a primitive extremal subset of codimension $2$ that is not an Alexandrov space with respect to the induced intrinsic metric.
As with the main theorem, no such examples can be found in the literature.

\step{Acknowledgments}
I would like to thank Semyon Alesker for his original question regarding Theorem \ref{thm:main} that motivated me to write this paper.
I am also grateful to John Harvey, Mohammad Alattar, and Shijie Gu for their valuable comments on an earlier version of this paper.

\section{Preliminaries}\label{sec:pre}

Here we begin with the definition of a CS set and then recall several notions and results from Alexandrov geometry that will be needed to prove Theorem \ref{thm:main}.
In fact, once those results are assumed, the proof requires almost no geometrical knowledge.

\subsection{CS sets}\label{sec:cs}

First we recall the definition of a CS set introduced by Siebenmann \cite[\S1]{Si}.
For simplicity, we restrict our attention to the finite-dimensional case.

A metrizable space $X$ is a \textit{stratified set} if there exists a filtration by closed subsets
\[\emptyset=X^{-1}\subset X^0\subset\dots\subset X^n=X\]
such that the components of $X_k:=X^k\setminus X^{k-1}$ are open in $X_k$.
We call $X^k$ and $X_k$ the \textit{$k$-dimensional skeleta} and \textit{strata}, respectively.
In addition, if the components of $X_k$ are $k$-dimensional topological manifolds (without boundary), then we call $X$ a \textit{TOP stratified set}.

An \textit{isomorphism} between (not necessarily TOP) stratified sets $X$ and $Y$ is a homeomorphism between $X$ and $Y$ preserving their strata of the same dimension.
This will be denoted by $X\cong Y$.

For stratified sets $X$ and $Y$, the direct product $X\times Y$ has a natural stratification.
The open cone
\[C(X):=X\times[0,1)/X\times\{0\}\]
has a natural stratification as well, which is induced from the product structure of $X\times(0,1)$ together with the zero-dimensional stratum consisting of the vertex of the cone.
An open subset $U$ of $X$ also has a natural stratification by restriction.

\begin{dfn}\label{dfn:cs}
A TOP stratified set $X$ is called a \textit{CS set} if, for any $x\in X_k$, there exist an open neighborhood $U$ of $x$ in $X$ and a compact stratified set $L$ together with an isomorphism
\[h:U\xrightarrow\cong\ B^k\times C(L),\]
where $B^k$ denotes an open Euclidean ball of dimension $k$.
Note that both sides have natural stratifications defined above, where the stratification of $B^k$ is trivial.
We call $L$ a \textit{(normal) link} and $h$ a  \textit{local normal trivialization} at $x$.
\end{dfn}

\begin{rem}
In general, $L$ need not be either a CS or TOP stratified set.
Moreover, its homeomorphism type is not necessarily unique (see Example \ref{ex:ds}).
\end{rem}

\begin{rem}\label{rem:wcs}
There is a weaker notion of a \textit{WCS set} introduced by Siebenmann \cite[\S5]{Si}.
There are two relaxations of the above conditions:
\begin{enumerate}
\item $C(L)$ may be a \textit{mock cone}, not necessarily a real cone;
\item $h$ may be a homeomorphism preserving only the $k$-dimensional stratum, not necessarily an isomorphism preserving the higher-dimensional strata.
\end{enumerate}
Any Alexandrov space is known to be a WCS set, but not known to be a CS set in the literature.
As we will see in Section \ref{sec:mcs}, the problem lies in (2) rather than (1) (i.e., $C(L)$ is a real cone but $h$ might not be an isomorphism).
In particular, we do not need the definition of the mock cone here.
\end{rem}

\subsection{Alexandrov spaces}

Next we recall the definition of an Alexandrov space introduced by Burago--Gromov--Perelman \cite{BGP} (though we will not use it directly).
See also \cite{BBI}, \cite{AKP:found}, \cite{Pl} for the basic theory of Alexandrov spaces.

Let $X$ be a metric space.
A \textit{shortest path} in $X$ is an isometric embedding of a closed interval into $X$.
We say that $X$ is a \textit{geodesic space} if every two points can be joined by a shortest path.

Fix $\kappa\in\mathbb R$.
The \textit{model plane} of curvature $\kappa$ refers to a sphere, Euclidean plane, or a hyperbolic plane of constant curvature $\kappa$.
For a geodesic triangle $\triangle pqr$ in a metric space, the \textit{comparison triangle} $\tilde\triangle pqr$ means a geodesic triangle in the model plane with the same side-lengths (if it exists and is unique up to isometry when $\kappa>0$).

\begin{dfn}
A complete geodesic space $X$ is called an \textit{Alexandrov space} with curvature $\ge\kappa$ if for any geodesic triangle $\triangle pqr$ in $X$ and the comparison triangle $\tilde\triangle pqr$ in the model plane of curvature $\kappa$, the natural correspondence
\[\triangle pqr\to\tilde\triangle pqr\]
is distance-nondecreasing.
\end{dfn}

In particular, this enables us to define the \textit{angle} $\angle pqr$ for every pair of shortest paths $qp$ and $qr$ (see the above references).
The notion of angle plays a fundamental role in the geometry of Alexandrov spaces.

\begin{ex}
The following are Alexandrov spaces with curvature $\ge\kappa$.
\begin{itemize}
\item A Riemannian manifold $M$ with sectional curvature $\ge\kappa$.
\item A convex subset and a convex hypersurface in $M$ (\cite{AKP:opt}).
\item A quotient space $M/G$, where $G$ is a compact group acting on $M$ by isometries.
\item A Gromov--Hausdorff limit of Riemannian manifolds $M_j$ with sectional curvature $\ge\kappa$.
\end{itemize}
\end{ex}

In this paper, we always assume the finite-dimensionality of Alexandrov spaces.
Here the \textit{dimension} of an Alexandrov space means its Hausdorff dimension, which coincides with the topological dimension.

\subsection{MCS spaces}\label{sec:mcs}

Here we recall another notion of stratified space introduced by Perelman \cite{Per:alex}, \cite{Per:mor}.

\begin{dfn}\label{dfn:mcs}
A metrizable space $X$ is called an \textit{MCS space} of dimension $n$ if, for any $x\in X$, there exist a neighborhood $U$ of $x$ in $X$ and a compact MCS space $L$ of dimension $n-1$ together with a pointed homeomorphism
\[(U,x)\to(C(L),o)\]
where $o$ is the vertex of the cone $C(L)$.
Here we assume that the empty set is the unique compact $(-1)$-dimensional MCS space, the cone over which is a point.
\end{dfn}

\begin{rem}
The conical neighborhood is unique up to pointed homeomorphism, as shown by Kwun \cite{Kw} (though the link may be different).
\end{rem}

Every MCS space $X$ has a natural stratification.
The $k$-dimensional stratum $X_k$ consists of those points whose conical neighborhood topologically splits into $B^k\times C(L)$ but not $B^{k+1}\times C(L')$, where $L$ and $L'$ are compact MCS spaces.
It is easy to see that the MCS stratification is a WCS stratification mentioned in Remark \ref{rem:wcs}.
More precisely, if $x\in X_k$, then there exist an open neighborhood $U$ of $x$ in $X$ and a compact MCS space $L$ such that there is a homeomorphism
\[h:U\to\ B^k\times C(L)\]
that sends the bottom stratum $U\cap X^k$ to $B^k\times\{o\}$, where $o$ is the vertex of $C(L)$.
The reason we cannot say this is a CS stratification is that it is unclear whether $h$ preserves the higher-dimensional strata, as noted in Remark \ref{rem:wcs}.

Perelman proved the following theorem.

\begin{thm}[\cite{Per:alex}, \cite{Per:mor}]
Every $n$-dimensional Alexandrov space is an MCS space of dimension $n$.
In particular, it is a WCS set.
\end{thm}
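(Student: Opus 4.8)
The plan is to argue by induction on $n = \dim X$. The inductive step asks, for each $p \in X$, for a neighborhood pointed homeomorphic to an open cone over a compact $(n-1)$-dimensional MCS space, and the link will be the space of directions $\Sigma_p$. By the structure theory of \cite{BGP}, $\Sigma_p$ is a compact Alexandrov space of dimension $n-1$ (with curvature $\ge 1$), so for $n \ge 2$ the inductive hypothesis applies and tells us that $\Sigma_p$ is a compact $(n-1)$-dimensional MCS space; the base case $n = 0$ is a point, which is $C(\emptyset)$, and the case $n = 1$, where $\Sigma_p$ may degenerate to one or two points and is then trivially a compact $0$-dimensional MCS space, is immediate from the classification of one-dimensional Alexandrov spaces. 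Thus the entire content of the inductive step is the following local claim.

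\medskip\noindent\textit{Claim.} For all sufficiently small $\varepsilon > 0$ there is a \emph{pointed} homeomorphism $(B(p,\varepsilon), p) \cong (C(\Sigma_p), o)$, where $o$ is the vertex.

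\medskip I would obtain the Claim from the infinitesimal geometry of $X$ at $p$ together with the stability theorem. Recall that the blow-ups $(X, \tfrac1t d_X, p)$ converge, as $t \to 0^+$, in the pointed Gromov--Hausdorff sense to the tangent cone $T_pX$, which is the Euclidean cone $C(\Sigma_p)$ and has dimension $n$; moreover the unit ball about the vertex in this cone metric is exactly $C(\Sigma_p)$ as defined above. Since the rescaled spaces have a uniform lower curvature bound, the parametrized (pointed) form of Perelman's stability theorem \cite{K:stab}, applied to balls of radius $1$, yields for all sufficiently small $t$ a homeomorphism $B(p, t) \to C(\Sigma_p)$ carrying $p$ to the vertex; this is the Claim with $\varepsilon = t$.

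Granting the Claim, the theorem follows at once: every point of $X$ has a neighborhood pointed homeomorphic to an open cone over the compact $(n-1)$-dimensional MCS space $\Sigma_p$, so $X$ is an $n$-dimensional MCS space; and, as noted before the statement, the MCS stratification is automatically a WCS stratification by its inductive construction, hence $X$ is a WCS set. The main obstacle is the Claim, which carries all of the geometry: via the stability theorem its proof is a short formal argument, so the difficulty has simply been exported to that theorem --- which itself rests on Perelman's Morse theory and Siebenmann's deformation theorem \cite{Si}. Perelman's original route instead proves the Claim by hand, constructing a strictly concave function near $p$ with a strict maximum at $p$ --- hence with no other critical points --- and running its gradient flow to retract $B(p,\varepsilon)$ onto the cone over a regular level set $L$, identifying $L$ with $\Sigma_p$ by blowing up to $T_pX$; there the delicate point is this last identification (and the care needed to make the flow converge to the vertex). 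Once the Claim is in hand, the inductive packaging is routine.
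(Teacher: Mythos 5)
The paper cites this theorem rather than proving it, but it tells you what the intended proof uses: Perelman's Morse theory for distance maps, packaged here as the canonical neighborhood theorem (Theorem~\ref{thm:cn}). Taking $k$ maximal at $p$, the neighborhood $B^k\times C(L)$ is an open cone over the join $S^{k-1}*L$, whose link is identified with $\Sigma_p$, and the MCS property follows by induction on dimension through the links. Your route is different: you feed the blow-up convergence $(X,t^{-1}d_X,p)\to(T_pX,o)=C(\Sigma_p)$ into the pointed stability theorem and read off the conical neighborhood directly. Your inductive packaging (the base cases, the compact $(n-1)$-dimensional link $\Sigma_p$, the remark that the MCS stratification is automatically WCS) is fine.

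The genuine gap is a circularity sharper than the ``exportation of difficulty'' you concede at the end. In the treatments this statement actually rests on --- Perelman's and Kapovitch's \cite{K:stab}, the very reference you invoke --- the local conical structure in dimension $n$ (your Claim, i.e.\ the MCS property) and the stability theorem are established by a single simultaneous induction on dimension, and within the $n$-th stage the conical neighborhood is obtained \emph{before} stability in dimension $n$: one builds a strictly concave function near $p$, shows $d(p,\cdot)$ is regular on a punctured ball (Morse theory), and identifies the regular level set with $\Sigma_p$ using stability in dimension $n-1$; stability in dimension $n$ is then derived from this local structure, not the other way around. Your application of stability is in dimension $n$ --- both the rescaled spaces and $T_pX$ are $n$-dimensional --- so the black box you open already contains the theorem you are trying to prove. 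The ``original route'' you sketch at the end is in fact the proof, and the ``delicate identification'' of the level set with $\Sigma_p$ that you correctly flag is exactly where the inductively available, lower-dimensional stability enters.
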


\begin{rem}
The proof actually shows that every Alexandrov space is a \textit{nonbranching} MCS space.
This is defined in the same inductive way as in Definition \ref{dfn:mcs}, but allowing only $1$-dimensional manifolds with boundary to be $1$-dimensional nonbranching MCS spaces.
\end{rem}

In order to prove the above theorem, Perelman developed the Morse theory for distance maps on Alexandrov spaces.
We will review it in the next section (see also \cite[\S2]{Per:icm}, \cite{K:stab}).

\subsection{Canonical neighborhoods}

The following theorem is proved by reverse induction on $k$, using the deformation theorem of Siebenmann for WCS sets \cite[5.4, 6.9, 6.14]{Si}.
Let $X$ be an Alexandrov space of dimension $n$.

\begin{thm}[{\cite[1.3]{Per:mor}}]\label{thm:cn}
For any $p\in X$, there exist an open neighborhood $K$ of $p$ in $X$ and a compact MCS space $L$ such that there is a homeomorphism
\[h:K\to B^k\times C(L),\]
where $0\le k\le n$.
More precisely, there exists a map $(f,g):K\to B^k\times[0,1)$ with the following properties:
\begin{enumerate}
\item $g(p)=0$ (possibly $g\equiv 0$ when $k=n$);
\item $f$ is regular on $K$ and $(f,g)$ is regular on $K\setminus g^{-1}(0)$;
\item $f$ restricted to $g^{-1}(0)$ is a homeomorphism to $B^k$ and $(f,g)$ restricted to $K\setminus g^{-1}(0)$ is a trivial fiber bundle over $B^k\times(0,1)$ with fiber $L$.
\end{enumerate}
In particular, the homeomorphism $h$ is constructed from (3): the first $B^k$-coordinate is given by $f$ and the projection to the second factor of $C(L)=L\times[0,1)/L\times\{0\}$ is given by $g$.
\begin{figure}[h]
\centering
\begin{tikzpicture}
\coordinate[label=below:$p$](p)at(0,0);

\draw(-3,-0)to(3,0);
\draw(p)to(1,2);
\draw(p)to(-1,2);
\draw[dashed](0,1.5)circle[x radius=0.75,y radius=0.25]node{$L$};

\draw[->](-3,-0.5)to(3,-0.5)node[right]{$f$};
\draw[->](3.5,0)to(3.5,2)node[above]{$g$};

\fill(p)circle(1.5pt);
\end{tikzpicture}
\end{figure}
\end{thm}

We will not state the definition of a regular map here (see \cite{Per:mor}, \cite{K:stab}).
It would be more helpful to give a basic example: for $k+1$ (!)\ points $a_i\in X$ ($0\le i\le k$), the distance map
\[f=(d(a_1,\cdot),\dots,d(a_k,\cdot))\]
is \textit{regular} at $p$ if the angles $\angle a_ipa_j$ are all greater than $\pi/2$, where $0\le i\neq j\le k$.
This condition should be regarded as linear independence of the gradients of the coordinate functions of $f$.
The precise definition is more complicated to include the function $g$ in Theorem \ref{thm:cn}.

\begin{rem}\label{rem:fib}
Any regular map is open.
Moreover, the second half of Theorem \ref{thm:cn}(3) is a consequence of the regularity of $(f,g)$ and the \textit{fibration theorem} proved simultaneously in \cite{Per:mor}, asserting that any proper regular map is a locally trivial fibration.
\end{rem}

\begin{rem}\label{rem:cn}
Given a regular map $f_0:X\to\mathbb R^{k_0}$ at $p$, the map $f$ of Theorem \ref{thm:cn} can be chosen to \textit{respect} $f_0$, that is, the first $k_0$-coordinates of $f$ coincide with $f_0$, where $k_0\le k$.
\end{rem}

Theorem \ref{thm:cn} allows us to introduce the following notion, which plays a key role in the proof of Theorem \ref{thm:main}.

\begin{dfn}\label{dfn:cn}
The subset $K$ of Theorem \ref{thm:cn} is called a \textit{canonical neighborhood} of $p$ associated with the map $(f,g)$.
We also denote it by $K(f,g)$ to indicate $(f,g)$.
If $f$ respects $f_0$ in the above sense, we say that $K$ \textit{respects} $f_0$.
\end{dfn}

\subsection{Extremal subsets}

We now recall the definition of an extremal subset introduced by Perelman--Petrunin \cite{PP:ext}.
See also \cite[\S3]{Per:icm}, \cite[\S4]{Pet:sc}, \cite[\S9.7]{Pl}, \cite{F:reg}, \cite{F:uni} for the basic theory.

Roughly speaking, an extremal subset is a closed subset of an Alexandrov space such that the set of normal directions at each point has diameter $\le\pi/2$.
The precise definition is as follows (but we will not directly use it in this paper).

\begin{dfn}\label{dfn:ex}
A closed subset $E$ of an Alexandrov space $X$ is called \textit{extremal} if the following holds:
for any $q\in X\setminus E$, if the distance function $d(q,\cdot)$ restricted to $E$ attains a local minimum at $p\in E$, then $p$ is a critical point of $d(q,\cdot)$, i.e.,
\[\angle qpx\le \pi/2\]
for every $x\in X$.
By convention, $\emptyset$ and $X$ are extremal subsets.
\end{dfn}

Note that the above definition of a critical point is compatible with the regularity of a distance function in the previous section.

\begin{ex}
The following are extremal subsets.
\begin{itemize}
\item A point at which all angles $\le\pi/2$.
This called an \textit{extremal point}.
\item The closures of the MCS strata of an Alexandrov space (see Section \ref{sec:mcs}); especially the \textit{boundary}, i.e., the closure of the codimension $1$ stratum.
\item If a compact group $G$ acts isometrically on an Alexandrov space $X$, then $X/G$ is an Alexandrov space.
The projection of  the fixed point set of any closed subgroup $H$ of $G$ is an extremal subset of $X/G$.
\end{itemize}
\end{ex}

We will recall how to define the extremal stratification of an Alexandrov space.
The following basic properties were proved in \cite{PP:ext}.
\begin{itemize}
\item The union and intersection of two extremal subsets are extremal.
Moreover, the closure of the difference of the two is extremal.
\item The collection of extremal subsets is locally finite.
\end{itemize}
These facts lead to the following definition.

\begin{dfn}
An extremal subset $E$ is called \textit{primitive} if it cannot be represented as a union of two proper extremal subsets, i.e., if
\[E=E_1\cup E_2,\]
where $E_1$ and $E_2$ are extremal, then $E=E_1$ or $E=E_2$.
For a primitive extremal subset $E$, its \textit{main part} $\mathring E$ is $E$ with all its proper extremal subsets removed.
\end{dfn}

\begin{rem}
In other words, a primitive $E$ is a minimal extremal subset containing some point $x\in X$. The main part of $E$ is the set of those $x$ for which $E$ is the minimal extremal subset containing it.
\end{rem}

Clearly, the main parts of all primitive extremal subsets form a disjoint covering of an Alexandrov space.
Furthermore, Perelman--Petrunin \cite[\S2]{PP:ext} developed the Morse theory on extremal subsets by extending Theorem \ref{thm:cn} to the restriction (!)\ of a regular map to extremal subsets.
In particular, they proved in \cite[3.8, 1.2]{PP:ext} the following:
\begin{itemize}
\item Any primitive extremal subset $E$ is an MCS space.
\item The closures of the MCS strata $E_k$ of $E$ are extremal subsets.
\end{itemize}
(Technically Perelman--Petrunin introduced a variant of MCS space and showed that every extremal subset is this variant, but for primitive ones, we do not need this notion.)
The above two facts imply that the main part of a primitive extremal subset is a connected topological manifold and is contained in some MCS stratum of the ambient Alexandrov space.
Let us summarize this as follows.

\begin{dfn}
The \textit{extremal stratification} of an Alexandrov space is defined in such a way that the $k$-dimensional stratum consists of the $k$-dimensional main parts of primitive extremal subsets.
\end{dfn}

\begin{thm}[{\cite[3.8]{PP:ext}}]
The extremal stratification of an Alexandrov space is a TOP stratification that is a refinement of the MCS stratification.
\end{thm}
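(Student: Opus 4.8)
The plan is to verify directly the three defining properties of a TOP stratified set for the skeleta $X^{k}:=\bigcup\{E:E\ \text{primitive extremal},\ \dim E\le k\}$, and then to check the refinement. The main parts of primitive extremal subsets partition the space, and for primitive $E$ one has $E=\bigsqcup\{\mathring{E'}:E'\subseteq E\ \text{primitive}\}$ with each such $E'$ of dimension $\le\dim E$, while conversely $\mathring E\subseteq E\subseteq X^{k}$ whenever $\dim E\le k$. Each $\mathring E$ is a connected topological manifold without boundary (recalled just above the statement), open in $E$ --- its complement there being a locally finite union of closed proper extremal subsets --- and dense in $E$, so $\dim\mathring E=\dim E$. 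Hence $X^{k}=\bigsqcup\{\mathring E:\dim E\le k\}$ and $X_{k}=X^{k}\setminus X^{k-1}=\bigsqcup\{\mathring E:\dim E=k\}$, a disjoint union of connected $k$-manifolds. For the filtration property, every extremal subset is a union of primitive ones (for $z\in E$ take the minimal extremal subset containing $z$), and the collection of extremal subsets is locally finite, so each $X^{k}$ is a locally finite union of closed sets and hence closed; evidently $X^{k-1}\subseteq X^{k}$ and $X^{n}=X$.

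The heart of the argument is that the connected manifolds $\mathring E$ are exactly the connected components of $X_{k}$, i.e.\ that each $\mathring E$ is open in $X_{k}$. The key is the following \textbf{claim}: \emph{if $E\subseteq F$ are primitive extremal subsets with $\dim E=\dim F$, then $E=F$}. Indeed, $\overline{F\setminus E}$ is extremal and $F=E\cup\overline{F\setminus E}$, so primitivity of $F$ forces $E=F$ or $\overline{F\setminus E}=F$; in the latter case $E$ is nowhere dense in $F$ and so contained in $F\setminus\mathring F$, which has dimension strictly less than $\dim F$ (being the complement in $F$ of the dense open topological manifold $\mathring F$), contradicting $\dim E=\dim F$. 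Granting this, fix $x\in\mathring E$ with $\dim E=k$; by local finiteness choose a neighborhood $U$ of $x$ so small that every extremal subset meeting $U$ contains $x$. For $y\in U\cap X_{k}$, write $y\in\mathring{E_{y}}$ where $E_{y}$ is the (automatically primitive) minimal extremal subset containing $y$; since $E_{y}$ meets $U$ it contains $x$, hence $E\subseteq E_{y}$ by minimality of $E$, and as $\dim E_{y}=k=\dim E$ the claim gives $E_{y}=E$ and $y\in\mathring E$. Thus $U\cap X_{k}\subseteq\mathring E$, so $\mathring E$ is open in $X_{k}$; being a locally finite disjoint family of open --- hence also closed --- connected sets covering $X_{k}$, the $\mathring E$ are its components, and the extremal stratification is a TOP stratification.

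It remains to check the refinement. Each $\mathring E$ lies in a single MCS stratum of $X$, which is exactly what was recalled just before the statement, so every extremal stratum is contained in an MCS stratum; moreover the MCS $k$-skeleton equals $\bigcup_{j\le k}\overline{X_{j}^{\mathrm{MCS}}}$ with each $\overline{X_{j}^{\mathrm{MCS}}}$ an extremal subset of dimension $\le k$, hence contained in $X^{k}$, so $X^{k}_{\mathrm{MCS}}\subseteq X^{k}$ for all $k$; either way the extremal stratification refines the MCS one. I expect the only point requiring genuine care to be the dimension estimate $\dim(F\setminus\mathring F)<\dim F$ used in the claim --- equivalently, that a proper extremal subset of a primitive extremal subset has strictly smaller dimension. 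This follows from dimension theory once the MCS structure of $E$ and the openness and density of $\mathring E$ in $E$ are in hand; everything else above is formal bookkeeping on top of the geometric results of Perelman--Petrunin \cite{PP:ext} recalled above.
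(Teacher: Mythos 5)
The paper does not give its own proof of this theorem---it cites it from Perelman--Petrunin \cite[3.8]{PP:ext}---but the paragraph preceding the statement recalls exactly the ingredients (primitive extremal subsets are MCS spaces, closures of their MCS strata are extremal, and consequently each main part $\mathring E$ is a connected manifold of dimension $\dim E$ lying in a single MCS stratum) from which the theorem is meant to follow, and your write-up is the natural way to turn that sketch into a complete argument, so this is essentially the paper's intended approach. Two small points in your key claim deserve attention. First, ``$E$ is nowhere dense in $F$ and so contained in $F\setminus\mathring F$'' is a non sequitur as written; the correct (and simpler) inference is that $E\subsetneq F$ directly forces $E\cap\mathring F=\emptyset$, since any $x\in\mathring F$ has $F$ as the minimal extremal subset containing it. Second, the dimension drop $\dim(F\setminus\mathring F)<\dim F$ that you flag does require two things to be supplied: density of $\mathring F$ in $F$ (if some proper extremal subset of $F$ had nonempty interior in $F$, then by Baire and local finiteness $\overline{F\setminus E'}\ne F$ for some such $E'$, contradicting primitivity of $F$), and the fact that a closed nowhere dense subset of an $m$-dimensional MCS space has dimension $<m$ (intersect with the open dense $m$-manifold top stratum, where closed nowhere dense implies dimension $<m$, and with the lower skeleton, and apply the sum theorem). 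With these supplied the proposal is correct.
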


\begin{rem}\label{rem:dim}
The dimension of the main part of a primitive extremal subset $E$ coincides with the Hausdorff and topological dimensions of $E$.
We simply call it the \textit{dimension} of $E$ (\cite{F:reg}).
It also coincides with the maximal integer $k$ such that there exists a regular map to $\mathbb R^k$ at some point of $E$ (\cite[\S2]{PP:ext}, \cite[p.133]{K:stab}).
\end{rem}

Next we discuss the relationship between extremal subsets and canonical neighborhoods.
The following characterization of extremal subsets is due to Perelman \cite[2.3]{Per:col} (see also \cite[5.1]{F:good}).
Let $X$ be an Alexandrov space.

\begin{lem}\label{lem:ext}
Suppose a subset $S$ of $X$ satisfies the following property:
\begin{enumerate}
\item[($\ast$)] if  $S$ intersects a canonical neighborhood $K(f,g)$, then $S$ contains $g^{-1}(0)$.
\end{enumerate}
Then the closure of $S$ is an extremal subset.
\end{lem}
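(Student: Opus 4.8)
The plan is to verify that $\bar S$ satisfies the standard distance-function characterization of extremality: a closed subset $E\subseteq X$ is extremal precisely when, for every $p\in X$, every point $q\in E$ at which $\dist(p,\cdot)|_E$ attains a local minimum is a critical point of $\dist(p,\cdot)$ on $X$. The point is that the hypothesis ($\ast$), together with the normal form for canonical neighborhoods in Theorem \ref{thm:cn}, directly obstructs the existence of a non-critical local minimum.

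First I would note that $\bar S$ again satisfies ($\ast$): if $\bar S$ meets a canonical neighborhood $K(f,g)$ then, since $K(f,g)$ is open, so does $S$, whence $g^{-1}(0)\subseteq S\subseteq\bar S$. So it is enough to prove that a \emph{closed} set $S$ satisfying ($\ast$) is extremal. Fix $p\in X$ and a point $q\in S$ at which $\dist(p,\cdot)|_S$ has a local minimum, and suppose for contradiction that $q$ is not a critical point of $\dist(p,\cdot)$. Then $\dist(p,\cdot)$ is regular at $q$ in the sense of Perelman's Morse theory, so by Theorem \ref{thm:cn} and Remark \ref{rem:cn} there is a canonical neighborhood $K=K(f,g)$ of $q$ respecting $\dist(p,\cdot)$: thus $q\in g^{-1}(0)$, $1\le k\le n$, and the first coordinate of $f$ equals $\dist(p,\cdot)$. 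Shrinking $K$, I may assume $\dist(p,\cdot)\ge\dist(p,q)$ on $S\cap K$, while ($\ast$) gives $g^{-1}(0)\subseteq S$, hence $g^{-1}(0)\subseteq S\cap K$. On the other hand, by Theorem \ref{thm:cn}(3) the restriction $f|_{g^{-1}(0)}$ is a homeomorphism onto $B^k$, so its first coordinate is an open map from $g^{-1}(0)$ onto an interval with $\dist(p,q)$ in its interior; in particular some $q'\in g^{-1}(0)$ arbitrarily close to $q$ satisfies $\dist(p,q')<\dist(p,q)$, contradicting the choice of $K$. Hence $q$ is critical, and $\bar S=S$ is extremal.

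I expect the only genuine subtlety to be the appeal to the Morse-theoretic machinery underlying the third sentence of the previous paragraph --- namely that a non-critical point of a single distance function is a regular point, so that Remark \ref{rem:cn} applies and produces a canonical neighborhood whose first coordinate is exactly $\dist(p,\cdot)$, and that such a canonical neighborhood can be taken as small as we like. Granting this (all of which is part of Perelman's setup), the remainder is just the elementary observation that an open map onto an interval admits no local minimum.
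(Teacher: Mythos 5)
The paper does not actually prove this lemma; it is imported wholesale from Perelman \cite[2.3]{Per:col} (with \cite[5.1]{F:good} offered as a secondary source), so there is no in-paper proof to compare against. That said, your argument is a correct reconstruction of the intended proof: you verify the defining local-minimum criterion for extremality and observe that, at a would-be non-critical local minimum $q$ of $\dist(p,\cdot)|_{\bar S}$, Remark~\ref{rem:cn} yields a canonical neighborhood $K(f,g)$ respecting $\dist(p,\cdot)$; property $(\ast)$ forces $g^{-1}(0)\subset \bar S$, and since $f|_{g^{-1}(0)}$ is a homeomorphism onto $B^k$ with first coordinate $\dist(p,\cdot)$, the value $\dist(p,q)$ cannot be a minimum on $g^{-1}(0)$, a contradiction. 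You correctly flag the two standard inputs you take on faith --- that non-criticality of a single distance function is the same as Perelman-regularity, and that canonical neighborhoods respecting a given regular $f_0$ can be taken inside any prescribed neighborhood of the point --- both of which are indeed part of Perelman's Morse-theoretic toolkit. The only cosmetic point worth adding is that the case $p=q$ is vacuous (the source point is trivially critical), so the regularity step is applied only when $p\neq q$; this is implicit in your setup but could be said explicitly.
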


\begin{rem}\label{rem:con}
The converse is true, that is, if an extremal subset $E$ intersects $K(f,g)$, then $E$ contains $g^{-1}(0)$.
This follows from the openness of a regular map restricted to an extremal subset, \cite[\S2]{PP:ext}.
\end{rem}

The next relative version of Theorem \ref{thm:cn} with respect to extremal subsets was proved by Kapovitch \cite[\S9]{K:stab}.

\begin{comp}[to Theorem \ref{thm:cn}]\label{comp:cn}
The homeomorphism $h$ of Theorem \ref{thm:cn} can be chosen to respect all extremal subsets, i.e., for any extremal subset $E$ intersecting $K$, the restriction of $h$ gives a homeomorphism to the subcone:
\[K\cap E\to B^k\times C(L\cap E).\]
\end{comp}

As in Remark \ref{rem:fib}, this also follows from the \textit{relative fibration theorem} proved simultaneously in \cite[\S9]{K:stab}, asserting that any proper regular map is a locally trivial fibration respecting extremal subsets.
Indeed, applying it to the regular map $(f,g)$ on $K\setminus g^{-1}(0)$ and gluing the homeomorphism $f$ on $g^{-1}(0)\subset E$ yield the desired homeomorphism $h$.
The proof relies on the relative version of the deformation theorem of Siebenmann \cite[5.10, 6.10, 6.16]{Si}.

\begin{rem}\label{rem:comp}
In the above statement, it is possible that $L\cap E=\emptyset$; in this case $K\cap E=g^{-1}(0)$ corresponds to $B^k\times\{o\}$, where $o$ is the vertex of the cone $C(L)$.
\end{rem}

\begin{rem}
Kapovitch only discussed the case of one extremal subset, but in view of the relative deformation theorem of Siebenmann cited above, it is easy to generalize it to the multiple case (as observed in \cite[4.3]{HS}).
\end{rem}

The key difference between the absolute version of the Morse theory on extremal subsets by Perelman--Petrunin \cite[\S2]{PP:ext} and the relative version by Kapovitch \cite[\S9]{K:stab} is whether it includes information on the normal directions or not.
The latter is essential for proving Theorem \ref{thm:main}.

\subsection{Double suspension}
We conclude this section with a well-known example from geometric topology.
From our point of view, this gives an example for which the extremal stratification does not coincide with the MCS stratification, due to the existence of nonspherical links.

For a metric space $\Sigma$ with diameter $\le\pi$, we denote by $S(\Sigma)$ the \textit{spherical suspension} over $\Sigma$, that is,
\[S(\Sigma)=\Sigma\times[0,\pi]/\sim,\]
where we identify each $\Sigma\times\{0\}$ and $\Sigma\times\{\pi\}$ with a point, and equip $S(\Sigma)$ with a spherical metric regarding $\Sigma\times\{\pi/2\}$ as an equator (see \cite[3.6.3]{BBI} for more details).
We also denote the \textit{$k$-fold spherical suspension} over $\Sigma$ by
\[S^k(\Sigma)=\underbrace{S(S(\cdots S}_k(\Sigma)\cdots)).\]

\begin{ex}\label{ex:ds}
Let $\Sigma$ be the Poincar\'e homology sphere of constant curvature $1$.
Then we have
\begin{itemize}
\item $S(\Sigma)$ is not a topological manifold;
\item $S^2(\Sigma)$ is a topological sphere by Cannon \cite{Ca} and Edwards \cite{E:icm}, \cite{E:susp}.
\end{itemize}
In particular, the MCS stratification of $S^2(\Sigma)$ is trivial.
On the other hand, since $\diam\Sigma\le\pi/2$ (cf.\ \cite{GS}), the extremal stratification consists of the subsuspension $S(\{\xi,\eta\})$ and its complement, where $\xi,\eta$ are the poles of $S(\Sigma)$.
Notice that the extremal subset $S(\{\xi,\eta\})$ coincides with the set of points with nonspherical links.
\end{ex}

\begin{rem}
Theorem \ref{thm:link} tells us that the above extremal subset must exist.
See Theorem \ref{thm:link'} and Corollary \ref{cor:link} for further discussion.
\end{rem}

\section{Proof of Theorem \ref{thm:main}}\label{sec:prf}

Now we prove Theorem \ref{thm:main}.
Almost all the background knowledge needed for the proof is covered in the previous section.

\begin{proof}[Proof of Theorem \ref{thm:main}]
Let $X$ be an Alexandrov space and $E$ a primitive extremal subset of dimension $m$.
We will show that there exists a compact stratified set $L$ (depending only on $E$) satisfying the following:
for any point $x$ in the main part $\mathring E$ of $E$, there is a neighborhood $U$ of $x$ in $X$ that admits an isomorphism as stratified sets
\begin{equation}\label{eq:prf}
U\cong B^m\times C(L).
\end{equation}
Here the stratifications of the left- and right-hand sides are the natural ones induced from those of $X$ and $L$, respectively.

\step{Step 1}
We first prove \eqref{eq:prf} for any regular point of $E$.
A point $p\in E$ is called \textit{regular} if the tangent cone $T_pE$ of $E$ at $p$ is isometric to $\mathbb R^m$, where $m=\dim E$.
Here the tangent cone means the Gromov--Hausdorff blowup; see \cite{F:reg} for the details.
Note that the set of regular points of $E$ is dense in $E$ and is contained in $\mathring E$ (\cite[1.1]{F:reg}, Remark \ref{rem:dim}).

Let $p\in E$ be a regular point.
Choose $(m+1)$ directions $\xi_i$ ($0\le i\le m$) in the tangent cone $T_pE=\mathbb R^m$ that make obtuse angles at the vertex and let $a_i\in X$ be points near $p$ in the directions $\xi_i$.
Then the distance map
\[f=(d(a_1,\cdot),\dots,d(a_m,\cdot))\]
is regular at $p$.
Since $\dim E=m$, there is no function $f_1$ such that $(f,f_1)$ is regular at $p$ (Remark \ref{rem:dim}).
By Theorem \ref{thm:cn} and Remark \ref{rem:cn}, one can construct a function $g$ and a canonical neighborhood $K=K(f,g)$ of $p$ together with a homeomorphism
\begin{equation}\label{eq:reg}
h:K\to B^m\times C(L_p),
\end{equation}
where $L_p$ is a fiber of $(f,g)$ in $K\setminus g^{-1}(0)$.

It remains to show that $h$ preserves the stratifications of both sides of \eqref{eq:reg}.
Note that not only $K$, but also $L_p$ has a natural stratification as a subset of $X$, by restricting the extremal stratification of $X$.
Since $L_p$ is not open, we have to show that this is a stratified set in the sense of Section \ref{sec:cs}, i.e., the components of the strata are open.
However, we defer it to the next step and first observe that $h$ preserves these stratifications.

By Complement \ref{comp:cn}, $h$ can be chosen to respect all extremal subsets.
Note that the right-hand side of \eqref{eq:reg} may have one extra stratum $B^m\times\{o\}$ arising from the cone structure of $C(L_p)$.
We have to show that there is a bottom stratum in the left-hand side that corresponds to $B^m\times\{o\}$.
This is nothing but $E$ itself by the same reasoning $\dim E=m$ as above (Theorem \ref{thm:cn}(2), Remark \ref{rem:dim}).

\step{Step 2}
As mentioned above, we prove that the components of the strata of $L_p$ are open.
It suffices to show that the strata are locally path-connected.
Let $E_1$ be a primitive extremal subset of $X$ intersecting $L_p$ and let $x\in L_p\cap\mathring E_1$.
By Theorem \ref{thm:cn} and Remark \ref{rem:cn}, there exists a canonical neighborhood $K_1$ of $x$ respecting $(f,g)$ (see Definition \ref{dfn:cn}).
By Complement \ref{comp:cn}, $K_1\cap L_p\cap\mathring E_1$ is homeomorphic to a product of a ball and a cone, which is locally path-connected at $x$.

\step{Step 3}
Finally we prove \eqref{eq:prf} for any point of the main part $\mathring E$.
Fix a regular point $p\in E$ and set $L:=L_p$.
Define a subset $F$ of $E$ by
\[F:=\left\{x\in E\mid\text{no open neighborhood in $X$ is isomorphic to $B^m\times C(L)$}\right\},\]
where the isomorphism is as stratified sets.
Note that $F$ is closed since the bottom stratum of $B^m\times C(L)$ has dimension $m$, while $\mathring E$ is the unique $m$-dimensional stratum contained in $E$.

We prove that $F$ is an extremal subset of $X$.
It suffices to verify Condition ($*$) of Lemma \ref{lem:ext} for $F$.
Suppose $x\in E$ has a small neighborhood isomorphic to $B^m\times C(L)$.
For a canonical neighborhood $K(f_1,g_1)$, we show that if $x\in g_1^{-1}(0)$, then any $y\in g_1^{-1}(0)$ has a neighborhood isomorphic to $B^m\times C(L)$ (note that $y\in E$ follows from Remark \ref{rem:con}).
Consider a homeomorphism
\[h_1:K(f_1,g_1)\to B^k\times C(L_1)\]
respecting extremal subsets as in Complement \ref{comp:cn} and a homeomorphism
\[T_1:B^k\times C(L_1)\to B^k\times C(L_1)\]
that sends $h_1(x)=(f_1(x),o)$ to $h_1(y)=(f_1(y),o)$ and fixes the $C(L_1)$-coordinate.
The composition $h_1^{-1}\circ T_1\circ h_1$ shows that $x$ and $y$ have isomorphic neighborhoods.
Thus Condition ($*$) is satisfied and hence $F$ is an extremal subset of $X$.

Since $p\notin F$, $F$ is a proper subset of $E$.
By the definition of the main part, $F$ is contained in $E\setminus\mathring E$.
Therefore, any point of $\mathring E$ has a neighborhood isomorphic to $B^m\times C(L)$.
This complete the proof.
\end{proof}

\begin{rem}
In Step 3, we do not really need Lemma \ref{lem:ext} to prove that $F$ is an extremal subset.
One can just repeat the same argument as in \cite[3.8, 1.2]{PP:ext}, which showed that $\mathring E$ is a topological manifold, by replacing the absolute fibration theorem on extremal subsets \cite[\S2]{PP:ext} with the relative one \cite[\S9]{K:stab}.
The reason we did not take this approach is because our exposition puts emphasis on canonical neighborhoods.
Our argument can be viewed as a fixed space version of \cite[5.6, 5.11]{F:good}, which showed that the homotopy type of a regular fiber in a collapsing Alexandrov space is invariant over a component of the extremal strata of the limit space.
\end{rem}

By the theorem of Handel \cite[2.4]{Ha}, we obtain the following corollary.

\begin{cor}
The intrinsic stratification of an Alexandrov space is CS.
\end{cor}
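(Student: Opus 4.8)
The proof will be a direct application of the cited theorem of Handel, so the work is almost entirely a matter of matching definitions. Recall from \cite[\S2]{H} that any metrizable, finite-dimensional space $X$ carries a canonical \emph{intrinsic} filtration, defined without reference to any prescribed stratification: two points are placed in the same stratum exactly when they admit pointed-homeomorphic neighborhoods in a manner compatible with local topological type, and any CS stratification of $X$ is automatically a refinement of this intrinsic one. Handel's resolution of Siebenmann's stratification conjectures \cite[2.4]{H} then asserts that if $X$ admits even one CS stratification, its intrinsic filtration is itself a CS stratification (in particular a TOP stratified set).

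The plan is as follows. First I would observe that an Alexandrov space $X$ meets the standing hypotheses of \cite[2.4]{H}: it is metrizable and finite-dimensional, and by the local finiteness of the collection of extremal subsets it has only finitely many nonempty strata near each point. Second, Theorem \ref{thm:main} provides one CS stratification of $X$, namely the extremal stratification of Perelman--Petrunin, so the hypothesis of Handel's theorem is fulfilled. Third, applying \cite[2.4]{H} immediately yields that the intrinsic stratification of $X$ is CS, which is exactly the assertion of the corollary.

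I do not anticipate any genuine obstacle; the only point deserving a word of care is to confirm that the intrinsic stratification in the sense of \cite{H} agrees with what we call the intrinsic stratification of an Alexandrov space, equivalently with its MCS stratification. This is the case because the MCS strata are characterized by precisely the local splitting condition used to define Handel's intrinsic filtration (a point is in the $k$-th stratum when its conical neighborhood splits off $B^k\times C(L)$ but not $B^{k+1}\times C(L')$ for compact $L,L'$). Granting this identification, the corollary in particular upgrades Perelman's WCS structure on the MCS stratification (the arrow $(4)$, composed with the known MCS-to-WCS passage) to a full CS structure, so that the diagram in the introduction factors through CS at the level of the MCS stratification as well.
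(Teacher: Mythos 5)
Your core argument is the same as the paper's: Theorem \ref{thm:main} supplies one CS stratification (the extremal one), and Handel's theorem \cite[2.4]{H} upgrades this to the statement that the intrinsic stratification is also CS. That part is fine.

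However, your final paragraph contains a genuine error. You assert that the intrinsic stratification ``agrees with\ldots the MCS stratification,'' reasoning that both are characterized by a splitting condition $B^k\times C(L)$ but not $B^{k+1}\times C(L')$. This overlooks the crucial difference the paper is careful to preserve: in the MCS stratification, $L$ and $L'$ are required to be compact \emph{MCS spaces}, whereas in the intrinsic (minimal) stratification they are allowed to be arbitrary compact topological spaces. A priori the intrinsic stratification could be strictly coarser --- a point might fail to split over MCS links yet split over some exotic compact link. The paper explicitly treats this as unresolved: immediately after the corollary it poses the Question ``Is the MCS stratification of an Alexandrov space different from the intrinsic stratification? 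Even if so, is it CS?'' So you should not claim the two coincide, and in particular the corollary does \emph{not} by itself upgrade Perelman's WCS structure on the MCS stratification to a CS structure, as your last sentence suggests. The corollary only concerns the intrinsic stratification; the status of the MCS stratification as a CS stratification remains open. Dropping the identification with the MCS stratification, the rest of your proof is correct and matches the paper's.
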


Here the \textit{intrinsic stratification} (or \textit{minimal stratification}) is defined as follows.
The $k$-dimensional stratum consists of those points whose conical neighborhood topologically splits into $B^k\times C(L)$ but not $B^{k+1}\times C(L')$, where $L$ and $L'$ are compact topological spaces.
Compare with the MCS stratification in Section \ref{sec:mcs}, where $L$ and $L'$ are MCS spaces.
The following question still remains.

\begin{ques}
Is the MCS stratification of an Alexandrov space different from the intrinsic stratification?
Even if so, is it CS?
\end{ques}

Another question remains regarding the link (cf.\ Theorem \ref{thm:link}, Conjecture \ref{conj:pl}).

\begin{ques}
Is the stratification of the link $L$ in the above proof TOP, or more strongly CS?
If it is CS, what about iterated links? (i.e., a link of a link of ...)
\end{ques}

\begin{rem}
Note that $L$ is an MCS space (Theorem \ref{thm:cn}).
Moreover, in view of the stability theorem \cite[4.3]{Per:alex}, $L$ is homeomorphic to the set $\Sigma$ of directions at $p$ normal to the tangent cone $T_pE$.
Since $p$ is a regular point, $\Sigma$ has curvature $\ge 1$.
Thus $\Sigma$ admits the extremal stratification, which gives $L$ a CS structure.
However, this CS stratification is generally finer than the above stratification of $L$ induced from the extremal stratification of the ambient space $X$.
See Example \ref{ex:ab}.
\end{rem}

\section{Proof of Theorem \ref{thm:link}}\label{sec:link}

Next we prove Theorem \ref{thm:link}.
Here we assume more familiarity with Alexandrov geometry and the properties of extremal subsets.

We first collect the fundamental properties of Alexandrov homology manifolds (with integer coefficients) that will be used throughout this section.
By a \textit{homology $n$-manifold} we mean a topological space whose local homology group is isomorphic to that of $\mathbb R^n$.

Let $X$ be an Alexandrov homology manifold and $p\in X$.
\begin{enumerate}
\item The space of directions $\Sigma_p$ at $p$ is an Alexandrov homology manifold with the homology of a sphere.
\item The set of nonmanifold points in $X$ is discrete; in particular, if $p$ is a nonmanifold point, then it is an extremal point.
\item The point $p$ is a manifold point if and only if $\Sigma_p$ is simply-connected, or equivalently, homotopy equivalent to a sphere (by (1) and the Whitehead theorem).
\item Every Alexandrov homology manifold of dimension $\le 3$ is a topological manifold.
\end{enumerate}

For the proofs, we refer to \cite{W:alex} (cf.\ \cite{W:ed}).
More precisely, Properties (1)--(4) can be found in \cite[3.1, 3.5, 1.2, 3.3]{W:alex}, respectively.
Most of these properties also follow from Theorem \ref{thm:main} and \cite{He}, \cite{Q:obs} (the author thanks Mohammad Alattar for this information).
See also \cite[7.2]{Q:prob} and the subsequent paragraph.
We remark that the corresponding results for geodesically complete spaces with curvature bounded above can be found in \cite{LN:top}.
The proofs there, which do not rely on strong structural results as in the Alexandrov case, also apply to Alexandrov spaces with curvature bounded below (if correctly translated).

We now prove Theorem \ref{thm:link}.
We show the following more rigid result.

\begin{thm}\label{thm:link'}
Let $X$ be an Alexandrov topological manifold.
Let $E$ be the set of points in $X$ at which the space of directions is not homeomorphic to a sphere.
If $E$ is nonempty, then it is a $1$-dimensional extremal subset.
\end{thm}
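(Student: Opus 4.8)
The plan is to determine, for every $p\in X$, the homeomorphism type of the space of directions $\Sigma_pX$ in terms of the extremal stratification, and then to feed this into Lemma~\ref{lem:ext}. Write $n=\dim X$. Since $X$ is a topological manifold it is an Alexandrov homology manifold, so each $\Sigma_pX$ is a homology $(n-1)$-sphere that is again a homology manifold; moreover every point of $X$ is a manifold point, so $\Sigma_pX$ is simply connected \cite{W}, \cite{LN:top}, hence a homotopy $(n-1)$-sphere, and by the topological Poincar\'e conjecture $\Sigma_pX\cong S^{n-1}$ if and only if $\Sigma_pX$ is a topological manifold. For $n\le 4$ the latter is automatic, so $E=\emptyset$ and we may assume $n\ge 5$. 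Given $p$, let $k_p$ be the dimension of the extremal stratum containing $p$, equivalently (Remark~\ref{rem:dim}) the largest $k$ admitting a regular map to $\mathbb R^{k}$ near $p$. The usual splitting at such a point writes $\Sigma_pX$ as a spherical join $S^{k_p-1}*\Sigma'_p$, where $\Sigma'_p$ is a compact space of curvature $\ge 1$ — the normal link of the stratum, which by Theorem~\ref{thm:cn} and Complement~\ref{comp:cn} also serves as the link of a canonical neighborhood $B^{k_p}\times C(\Sigma'_p)$ of $p$ — and the K\"unneth formula for joins together with $X$ being a homology manifold forces $\Sigma'_p$ to be a homology $(n-1-k_p)$-sphere.

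The essential external input is now the double suspension theorem of Cannon~\cite{C} and Edwards~\cite{E:icm},~\cite{E:susp}. If $k_p\ge 2$, then $\Sigma_pX\cong S^{k_p-1}*\Sigma'_p$ is at least a twofold suspension of a homology sphere, hence homeomorphic to $S^{n-1}$; so $p\notin E$, and $E$ is contained in the union of the strata of dimension $\le 1$. If $k_p=1$, then $\Sigma_pX\cong S(\Sigma'_p)$, whose two poles have space of directions $\Sigma'_p$; by the manifold-recognition criterion — a point of an Alexandrov homology manifold is a manifold point exactly when its space of directions is simply connected — the suspension $S(\Sigma'_p)$ is a topological manifold, hence homeomorphic to $S^{n-1}$, precisely when $\pi_1(\Sigma'_p)=1$. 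If $k_p=0$, then $p$ is an extremal point and $\Sigma_pX\cong\Sigma'_p$, so $p\in E$ if and only if $\Sigma'_p\not\cong S^{n-1}$.

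Next I would show that $E$ is closed and then that it is extremal. For closedness, suppose $p\notin E$. If $k_p\ge 2$, every point of a small canonical neighborhood has $k_q\ge 2$, hence is not in $E$ by the double suspension theorem. If $k_p=1$, in a canonical neighborhood $B^{1}\times C(L)$ one has $\Sigma_pX\cong S(L)$, so $\pi_1(L)=1$; points over the vertex stratum $B^{1}\times\{o\}$ then have $\Sigma_qX\cong S(L)\cong S^{n-1}$, and points off it have $k_q\ge 2$, so none lie in $E$. If $k_p=0$, then $L=\Sigma_pX$ is a topological manifold, and for a point $(\zeta,t)$ with $t>0$ either $\zeta$ lies in a positive-dimensional stratum of $L$ (so $k_q\ge 2$) or $\zeta$ is an extremal point of $L$, in which case $\Sigma_qX\cong S(\Sigma_\zeta L)$ with $\pi_1(\Sigma_\zeta L)=1$ since $\zeta$ is a manifold point of $L$; either way $q\notin E$. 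Thus a neighborhood of $p$ avoids $E$. For extremality I would verify condition $(\ast)$ of Lemma~\ref{lem:ext}: if $E$ meets a canonical neighborhood $K(f,g)$, pick $q\in E\cap K(f,g)$; the minimal extremal subset $E_0\ni q$ satisfies $\dim E_0=k_q\le 1$ and contains $g^{-1}(0)$ by Remark~\ref{rem:con}, and every $z\in g^{-1}(0)\subseteq E_0$ lies in $E$ — immediately if $z\in\mathring E_0$, since by Theorem~\ref{thm:main} its normal link equals that of $q$ and hence has nontrivial fundamental group, and by the conical structure of $E_0$ at $z$ (stability theorem \cite{Per:alex}, \cite{K:stab}) together with closedness if $z$ is an extremal point of the $1$-dimensional $E_0$; the case $\dim E_0=0$ is trivial. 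Hence $E=\overline E$ is extremal and $\dim E\le 1$ (Remark~\ref{rem:dim}); and if $E\neq\emptyset$ then $\dim E=1$, because a point $p\in E$ with $k_p=0$ has $\Sigma_pX$ non-manifold, hence with a non-manifold (extremal) point $\xi$, and the points $(\xi,t)$ for small $t>0$ lie in a $1$-dimensional stratum and belong to $E$ as their space of directions $S(\Sigma_\xi\Sigma_pX)$ has non-manifold poles, so $p$ is a limit of $1$-dimensional strata contained in $E$, while points with $k_p=1$ already lie in such a stratum. Therefore $E$ is a $1$-dimensional extremal subset.

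The step I expect to require the most care is the structural one underlying the first two paragraphs: establishing cleanly that $\Sigma_pX$ is genuinely the spherical join of a round sphere with a homology sphere (the homology-sphere factor being the normal link of the extremal stratum) for \emph{every} point of a stratum, and that Complement~\ref{comp:cn} and the relative stability theorem propagate this description along the vertex strata of canonical neighborhoods; here it is useful that for the strata that actually matter — those of dimension $\le 1$ — every point of the main part is a regular point of the corresponding primitive extremal subset, so the join splitting is available everywhere it is needed. Once these facts are in place, the argument is driven entirely by the double suspension theorem and the manifold-recognition criterion for Alexandrov homology manifolds, and no further geometry is required.
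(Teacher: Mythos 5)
Your proposal takes a genuinely different route from the paper's.  The paper works directly with the infinitesimal characterization of extremal subsets --- $E$ is extremal iff $E$ is closed and $\Sigma_pE$ is extremal in $\Sigma_p$ for all $p\in E$ --- and reduces both closedness and extremality to a single convergence statement (the paper's Claim in the proof): $\xi\in\Sigma_p$ is a nonmanifold point if and only if some sequence of points of $E$ converges to $p$ in the direction $\xi$.  That claim is proved via a careful scaling argument combining Perelman's stability and fibration theorems (plus a deep input from \cite{CBL}, \cite{BDVW}).  You instead try to leverage the extremal stratification and Theorem~\ref{thm:main} already in hand, classifying $\Sigma_pX$ by the dimension $k_p$ of the stratum through $p$ and a ``normal link'' $\Sigma'_p$, and then verifying condition~$(\ast)$ of Lemma~\ref{lem:ext}.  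Both proofs ultimately rest on the same topological input (homology-manifold recognition and the generalized Poincar\'e conjecture / double suspension circle of ideas), but the intermediate machinery is quite different.

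There are, however, real gaps in your structural step.  The splitting $\Sigma_pX\cong S^{k_p-1}*\Sigma'_p$ as a spherical join holds exactly only at \emph{regular} points of the primitive extremal subset $E$ through $p$, i.e.\ where $T_pE$ is isometric to $\mathbb R^{k_p}$, since only then do the directions along $E$ furnish genuine lines to split $T_pX$.  At a non-regular point of a stratum there is no such isometric splitting; moreover the ``regularity'' of $p$ (the largest $k$ with a regular map to $\mathbb R^k$ at $p$, which governs the canonical neighborhood $B^k\times C(L)$) need not equal the dimension of the stratum, so your identification of $k_p$ with both quantities is unjustified.  You address the issue for strata of dimension $\le 1$ by asserting, without proof, that every point of the main part of a $1$-dimensional primitive extremal subset is a regular point; but for strata of dimension $\ge 2$ the issue is not addressed at all, and your conclusion that $k_p\ge 2$ forces $p\notin E$ (hence that $E$ lives in the $\le 1$-skeleton) is not established.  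Your closedness argument reuses the same unproved implication and so does not repair this.  A secondary gap: the link $L$ appearing in the canonical neighborhood $B^k\times C(L)$ is a fiber of the map $(f,g)$, not literally a subset of $\Sigma_p$; identifying it with a subspace of directions $\Sigma'_p\subset\Sigma_p$, and then promoting the topological cone structure to a join decomposition of $\Sigma_p$, requires the stability theorem and further work --- this is exactly what the paper's Claim does and what you gloss over, including in the later step where you claim points $(\xi,t)$ near an extremal point $p$ ``have'' space of directions $S(\Sigma_\xi\Sigma_pX)$.  These gaps concentrate precisely at the places where the paper invokes the stability and fibration theorems, which suggests your route, while conceptually appealing, does not actually avoid the hard analytic input.
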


Since any Alexandrov space without proper extremal subsets is a topological manifold, Theorem \ref{thm:link'} immediately implies Theorem \ref{thm:link}.

In order to prove Theorem \ref{thm:link'}, we recall the infinitesimal characterization of extremal subsets proved in \cite[1.4]{PP:ext} (cf.\ \cite[\S4.1 Property (2)]{Pet:sc}).
Let $X$ be an Alexandrov space and $E$ a subset of $X$ (it need not be extremal or even closed).
For any point $p$ in the closure of $E$, we denote by $\Sigma_pE$ the \textit{space of directions} of $E$ at $p$, that is, the subset of $\Sigma_p$ consisting of limits of the directions of shortest paths from $p$ to $p_i\in E\setminus\{p\}$, where $p_i$ converges to $p$.
Note that $\Sigma_pE$ is always closed, while $E$ may be not.

\begin{lem}[{\cite[1.4]{PP:ext}}]\label{lem:inf}
Let $X$ be an Alexandrov space and $E$ a closed subset.
Then $E$ is extremal in $X$ if and only if $\Sigma_pE$ is extremal in $\Sigma_p$ for any $p\in E$ and satisfies the following additional conditions:
\begin{enumerate}
\item if $\Sigma_pE=\emptyset$, then we have $\diam\Sigma_p\le\pi/2$;
\item if $\Sigma_pE=\{\xi\}$, then we have $\Sigma_p=\bar B(\xi,\pi/2)$.
\end{enumerate}
Here $\diam$ denotes the diameter and $\bar B(\cdot,\cdot)$ denotes a closed metric ball.
\end{lem}

\begin{proof}[Proof of Theorem \ref{thm:link'}]
By Properties (1), (3) and (4), in dimension $\le4$, every space of directions of an Alexandrov topological manifold is automatically homeomorphic to a sphere (via the Poincar\'e conjecture).
Therefore we may restrict our attention to dimension $\ge 5$.

Let $X$ be an Alexandrov topological manifold and $p\in X$.
By Properties (1) and (3), the space of directions $\Sigma_p$ is a homology manifold with the homotopy type of a sphere.
In particular, $\Sigma_p$ is homeomorphic to a sphere if and only if it is a topological manifold (by the generalized Poincar\'e conjecture).
By Properties (1) and (3) again, $\Sigma_p$ is a topological manifold if and only if all its space of directions are simply-connected.
In summary,
\begin{align*}
&\text{$\Sigma_p$ is a topological sphere}\\
\iff&\text{$\Sigma_p$ is a topological manifold}\\
\iff&\text{every space of directions of $\Sigma_p$ is simply-connected}.
\end{align*}
Furthermore, if $\Sigma_p$ has nonmanifold points, then they are isolated extremal points (Property (2)).
These facts will be used frequently below.

To prove that $E$ is an extremal subset, it is sufficient to show that
\begin{itemize}
\item $E$ is closed;
\item $\Sigma_pE$ is an extremal subset of $\Sigma_p$ for any $p\in E$;
\item $\Sigma_pE$ satisfies the additional conditions as in Lemma \ref{lem:inf}.
\end{itemize}
In particular, for the second claim, we will prove in Claim \ref{clm:link} that $\Sigma_pE$ coincides with the set of nonmanifold points in $\Sigma_p$, which is a discrete set of extremal points (Property (2)).

\step{Step 1}
We first show that if $\Sigma_p$ contains only one nonmanifold point $\xi$, then $\Sigma_p$ is contained in the closed $\pi/2$-neighborhood of $\xi$.
This is an additional condition required to apply Lemma \ref{lem:inf} (as we will see below, we do not have to consider the other case $\Sigma_pE=\emptyset$).
Actually we prove that $\diam\Sigma_p\le\pi/2$.

Suppose $\diam\Sigma_p>\pi/2$.
Then the suspension theorem of Perelman \cite[4.5]{Per:alex} implies that $\Sigma_p$ is homeomorphic to a suspension $S(\Sigma)$, where $\Sigma$ is a space of curvature $\ge 1$.
Moreover, its proof shows that the distance function to each pole of $S(\Sigma)$ is regular except at the poles.
In particular, any point other than the poles is not an extremal point, and thus a manifold point (Property (2)).
Therefore the nonmanifold point $\xi$ must be one of the poles of $S(\Sigma)$.
Let $\eta$ be the other pole.
By the suspension structure and the stability theorem \cite[4.3]{Per:alex}, we see that $\Sigma_\eta$ is homotopy equivalent to $\Sigma_\xi$ (more precisely, they are both homotopy equivalent to $\Sigma$).
Since $\xi$ is a nonmanifold point, this implies that $\eta$ is also a nonmanifold point (Property (3)).
This contradicts the assumption that $\xi$ is the only nonmanifold point of $\Sigma_p$.

\step{Step 2}
We next prove the following key result.

\begin{clm}\label{clm:link}
Let $p\in X$ and $\xi\in\Sigma_p$.
Then $\xi$ is a nonmanifold point if and only if there exists a sequence $p_i\in E\setminus\{p\}$ converging to $p$ such that the directions of shortest paths from $p$ to $p_i$ converges to $\xi$.
In other words,
\[\Sigma_pE=N(\Sigma_p),\]
where $N(\Sigma_p)$ denotes the set of nonmanifold points in $\Sigma_p$.
\end{clm}

The idea of the proof is as follows.
Suppose a neighborhood of $p$ is isometric to a metric cone over $\Sigma_p$.
Let $x\in X$ be a point in the direction $\xi\in\Sigma_p$.
Then $\Sigma_x$ is isometric to the spherical suspension $S(\Sigma_\xi)$, where $\Sigma_\xi$ is the space of directions of $\Sigma_p$ at $\xi$.
As in Step 1, $\Sigma_x$ is homeomorphic to a sphere if and only if $\Sigma_\xi$ is simply-connected.
The former is equivalent to $\xi\in\Sigma_pE$, and the latter is equivalent to $\xi\in N(\Sigma_p)$ (Property (3)).
We adapt this argument to the general setting of Alexandrov spaces, with the help of technical results of Perelman \cite{Per:alex}.

\begin{proof}[Proof of Claim \ref{clm:link}]
For $a,b\in X$, we denote by $b'_a\in\Sigma_a$ the direction of a shortest path from $a$ to $b$.
We also denote by $B(p,r)$ and $\partial B(p,r)$ the open $r$-ball and $r$-sphere around $p$, respectively.
Suppose $r$ is small enough and let $x\in\partial B(p,r)$.
Then the point $p$ is a \textit{$1$-strainer} at $x$, that is, there exists a direction at $x$ making an angle almost $\pi$ with any $p'_x$.
Next choose $\varepsilon>0$ small enough compared to $r$.
Then the pair of points $(p,x)$ is a \textit{$2$-strainer} at any $y\in\partial B(p,r)\cap\partial B(x,\varepsilon)$, i.e., $p$ and $x$ are $1$-strainers at $y$ and $p'_y$ and $x'_y$ are almost orthogonal.
In particular, the distance map $(d(p,\cdot),d(x,\cdot))$ is \textit{noncritical} at $y$ in the sense of \cite[3.1]{Per:alex}.
See also \cite{BBI}, \cite{BGP} for the precise definition and properties of strainers.

\begin{figure}[h]
\centering
\begin{tikzpicture}
\coordinate[label=below:$p$](p)at(0,0);
\coordinate[label=right:$x$](x)at(45:2);

\draw[name path=r](p)circle[radius=2];
\draw[name path=e](x)circle[radius=0.5];

\path[name intersections={of=r and e}];
\coordinate[label=right:\textcolor{red}{$\Sigma$}](s)at(intersection-2);
\coordinate(s')at(intersection-1);

\draw[red,thick,densely dotted](s)to[out=150,in=-60](s');
\draw[dashed](p)circle[x radius =2,y radius=0.5];

\node[below right]at(-45:2){$\partial B(p,r)$};
\node[right]at(45:2.5){$\partial B(x,\varepsilon)$};

\fill(p)circle(1.5pt);
\fill(x)circle(1.5pt);
\end{tikzpicture}
\end{figure}

We will show that $\Sigma_x$ is homeomorphic to the topological suspension over
\[\Sigma:=\partial B(p,r)\cap\partial B(x,\varepsilon).\]
For notational simplicity, suppose that $p'_x$ is unique (the general case is the same by regarding $p'_x$ as the set of all directions of shortest paths from $x$ to $p$).
Let $\eta\in\Sigma_x$ be the unique direction farthest from $p'_x$ and $\zeta\in\Sigma_x$ be the unique direction farthest from $\eta$ (note that $\eta$ and $\zeta$ are at distance almost $\pi$).
Then $\zeta$ is $\delta(r)$-close to $p'_x$, where $\delta(r)\to0$ as $r\to 0$.
By the suspension theorem \cite[4.5]{Per:alex} (see also \cite[2.8]{GW:rad}), $\Sigma_x$ is homeomorphic to the topological suspension $S(\partial B(\zeta,\pi/2))$ over $\partial B(\zeta,\pi/2)\subset\Sigma_x$, where $\zeta$ is one of the poles of the suspension (the other pole is a unique direction farthest from $\zeta$).
Since $\zeta$ is close enough to $p'_x$, we see that $\partial B(\zeta,\pi/2)$ is homeomorphic to $\partial B(p'_x,\pi/2)$.
More precisely, this follows from the fact that the distance functions $d(\zeta,\cdot)$ and $d(p'_x,\cdot)$ are both regular in the same direction to $\eta$ on the $\pi/5$-neighborhood of $\partial B(\zeta,\pi/2)$.
Using this fact, one can construct a noncritical function $\phi$ in the sense of \cite[3.1]{Per:alex} of the form
\[\phi=\min\{\phi_1\circ d(\zeta,\cdot),\phi_2\circ d(p'_x,\cdot)\}\]
where $\phi_i$ ($i=1,2$) are Lipschitz functions chosen so that $\phi$ coincides with $d(\zeta,\cdot)$ around $\partial B(\zeta,\pi/2-\pi/10)$ and with $d(p'_x,\cdot)$ around $\partial B(\zeta,\pi/2+\pi/10)$.
Then one can apply the fibration theorem \cite[1.4.1]{Per:alex} to $\phi$ to show that $\partial B(\zeta,\pi/2-\pi/10)$ and $\partial B(p'_x,\pi/2+\pi/10)$ are homeomorphic.
By the fibration theorem again, these are homeomorphic to $\partial B(\zeta,\pi/2)$ and $\partial B(p'_x,\pi/2)$, respectively.
Finally, the stability theorem \cite[4.3]{Per:alex} implies that $\Sigma$ is homeomorphic to $\partial B(p'_x,\pi/2)$, provided $\varepsilon$ is small enough.
Therefore $\Sigma_x$ is homeomorphic to $S(\Sigma)$.

Moreover, as explained in Step 1, any point other than the poles of the suspension is not an extremal point, and thus a manifold point.
Therefore, $\Sigma_x$ is a topological manifold if and only if $\Sigma$ is simply-connected (Property (3)).

Since $r$ is small enough, it follows from the stability theorem that $\partial B(p,r)$ is homeomorphic to $\Sigma_p$.
In particular, $\partial B(p,r)$ is a homology manifold with isolated nonmanifold points (Property (2)).
Since $\varepsilon$ is small enough, it follows from the fibration theorem that $\partial B(p,r)\cap B(x,\varepsilon)$ is homeomorphic to the open cone $C(\Sigma)$ (more precisely, the fibration theorem implies that $\partial B(p,r)\cap\bar B(x,t)\setminus B(x,s)$ is homeomorphic to $\Sigma\times[s,t]$ for any $0<s<t<\varepsilon$, and by gluing such homeomorphisms we obtain the desired cone structure).
Thus $\Sigma$ is simply-connected if and only if $x$ is a manifold point of $\partial B(p,r)$ (the ``only if'' part follows from \cite{CBL}, \cite{BDVW}; see also \cite[6.2]{LN:top} and note that $\dim\partial B(p,r)\ge 4$).
In summary,
\begin{align*}
&\text{$\Sigma_x$ is a topological sphere (equivalently, a topological manifold)}\\
\iff&\text{$\Sigma=\partial B(p,r)\cap\partial B(x,\varepsilon)$ is simply-connected}\\
\iff&\text{$x$ is a manifold point of $\partial B(p,r)$}.
\end{align*}
Since the rescaled sphere $r^{-1}\partial B(p,r)$ converges to $\Sigma_p$ as $r\to0$ and the stability homeomorphism is a Gromov--Hausdorff approximation, the claim follows.
\end{proof}

Claim \ref{clm:link} and Property (2) imply that $\Sigma_pE$ is an extremal subset of $\Sigma_p$.
Claim \ref{clm:link} also shows that $E$ is closed and has no isolated points.
By Lemma \ref{lem:inf} and the additional condition proved in Step 1, we conclude that $E$ is an extremal subset.
This completes the proof.
\end{proof}

\begin{rem}\label{rem:hom}
More generally, Theorem \ref{thm:link'} holds for any Alexandrov homology manifold $X$: the conclusion in this setting is that $E$ is an extremal subset of dimension $\le 1$ (in particular, $E$ contains the isolated nonmanifold points of $X$).
Let us trace the above proof, checking for required modifications.

The dimension $\le4$ case follows from Properties (1)--(4), so we may assume dimension $\ge 5$ as before.
Step 1 of the proof of Theorem \ref{thm:link'} works exactly the same way.
Step 2 requires the following minor modifications.
In the proof of Claim \ref{clm:link}, we used the assumption that $X$ is a topological manifold to ensure that $\Sigma_x$ is simply-connected (Property (3)), which implied that $\Sigma_x$ is a topological sphere if and only if it is a topological manifold.
However, in the proof of Claim \ref{clm:link}, $\Sigma_x$ is a topological suspension over $\Sigma$, where $\Sigma$ is connected since $\partial B(p,r)$ is a homology manifold.
Therefore $\Sigma_x$ is automatically simply-connected.
Another difference is that, in the last paragraph, Claim \ref{clm:link} may not imply that $E$ has no isolated points.
We have to check that if $E$ has an isolated point $p$, then it is an extremal point.
This is proved as follows.
Since $p$ is isolated in $E$, Claim \ref{clm:link} implies that $\Sigma_p$ is a topological manifold.
Thus $\Sigma_p$ is not simply-connected (otherwise, it is a topological sphere).
By Property (3), $p$ is an extremal point.
\end{rem}

\begin{rem}
Here we give a more geometric interpretation of the ``only if'' part of Claim \ref{clm:link}.
Let $\xi\in\Sigma_p$ and $r_i\to 0$.
Consider the pointed Gromov--Hausdorff convergence $({r_i}^{-1}X,p)\to(T_p,o)$, where $(T_p,o)$ is the tangent cone of $X$ at $p$ based at the vertex.
There is a canonical way of lifting $\xi$ to $p_i\in{r_i^{-1}}X$ so that the distance function $d(p_i,\cdot)$ is uniformly regular around $p_i$ independent of $i$, that is,
\[|\nabla d(p_i,\cdot)|>\varepsilon\]
on $r_i^{-1}\bar B(p_i, r_i\varepsilon)\setminus\{p_i\}$, where $\varepsilon>0$ is independent of $i$ (see \cite{Pet:sc} for the definition of the gradient).
For the proof, see \cite[3.2]{Y:ess}, \cite[1.2]{MY:stab} (actually a similar argument is included in the proof of the stability theorem used above).
This, together with the fibration and stability theorems, implies that $\Sigma_{p_i}$ is homeomorphic to $r_i^{-1}\partial B(p_i,r_i\varepsilon)$.
By the fibration and stability theorems again, $r_i^{-1}\partial B(p_i,r_i\varepsilon)$ is homeomorphic to $\partial B(\xi,\varepsilon)\subset T_p$, which is homeomorphic to the space of directions of $T_p$ at $\xi$, provided $\varepsilon$ is small enough (cf.\ \cite[5.1]{K:stab}).
This space of direction is isometric to the spherical suspension $S(\Sigma_\xi)$, where $\Sigma_\xi$ is the space of directions of $\Sigma_p$ at $\xi$.
Therefore,
\begin{align*}
&\text{$\xi$ is a manifold point of $\Sigma_p$}\\
\iff&\text{$\Sigma_\xi$ is simply-connected}\\
\iff&\text{$S(\Sigma_\xi)$, equivalently $\Sigma_{p_i}$, is a topological sphere}.
\end{align*}
This shows the ``only if'' part of Claim \ref{clm:link}.
It might also be possible to prove the ``if'' part by modifying this lifting argument, but we do not pursue it here.
\end{rem}

As an application of Theorem \ref{thm:link'}, we obtain a new sphere theorem for the link of an Alexandrov homology manifold with positive curvature.

\begin{cor}\label{cor:link}
Let $\Sigma$ be an Alexandrov homology manifold with curvature $\ge 1$.
If $\Sigma$ contains four points with pairwise distance $>\pi/2$, then every space of directions is homeomorphic to a sphere.
\end{cor}

Here $\Sigma$ itself is also a topological sphere, in particular, a topological manifold; see Remark \ref{rem:three}(2).
Hence Corollary \ref{cor:link} is an immediate consequence of Theorem \ref{thm:link'} and the $k=1$ case of the following general lemma.
By convention, the empty set has dimension $-1$.

\begin{lem}\label{lem:reg}
Let $\Sigma$ be an Alexandrov space with curvature $\ge 1$.
Suppose $\Sigma$ contains a $k$-dimensional extremal subset $F$, where $-1\le k\le\dim\Sigma$, satisfying the additional conditions as in Lemma \ref{lem:inf} when $k=-1,0$.
Then $\Sigma$ does not contain $k+3$ points with pairwise distance $>\pi/2$. 
\end{lem}

Note that the additional conditions above do not matter when proving Corollary \ref{cor:link}, because the extremal subset obtained from Theorem \ref{thm:link'} has dimension $1$.

\begin{proof}[Proof of Lemma \ref{lem:reg}]
Let $C(\Sigma)$ be the Euclidean cone over $\Sigma$.
Then the subcone $C(F)$ is a $(k+1)$-dimensional extremal subset (this follows, for example, Lemma \ref{lem:inf}).
If $\Sigma$ contains $k+3$ points with pairwise distance $>\pi/2$, then there exists a regular map at the vertex of $C(\Sigma)$ to $\mathbb R^{k+2}$.
This contradicts Remark \ref{rem:dim}.
\end{proof}

\begin{rem}\label{rem:opt}
Corollary \ref{cor:link} is optimal in the sense that the number of discrete points and the inequality for the distance in the assumption cannot be relaxed.
Indeed, the double suspension of a homology sphere as in Example \ref{ex:ds} contains four points with pairwise distance $\ge\pi/2$.
\end{rem}

\begin{rem}
In general, if an Alexandrov space $\Sigma$ with curvature $\ge1$ contains $k+1$ points with pairwise distance $>\pi/2$, then $\Sigma$ is homeomorphic to a $k$-fold suspension (\cite[Theorem C]{GW:rad}).
In the maximal case, i.e., $k=\dim\Sigma+1$, we obtain that $\Sigma$ is bi-Lipschitz homeomorphic to the standard sphere (cf.\ \cite[3.2]{Per:alex}, \cite{Per:mor}).
In the Riemannian case, the existence of such $\pi/2$-discrete points gives some restriction on the differentiable structure (\cite{GW:exo}).
\end{rem}

\begin{rem}\label{rem:three}
Let us recall what can be said if the number of $\pi/2$-discrete points is smaller (for the larger case, see Corollary \ref{cor:poly} and Remark \ref{rem:conj}).
An Alexandrov homology manifold $\Sigma$ with curvature $\ge 1$ is a topological sphere if one of the following holds:
\begin{enumerate}
\item $\Sigma$ contains no nonmanifold points and has diameter $>\pi/2$; or
\item $\Sigma$ contains three points with pairwise distance $>\pi/2$.
\end{enumerate}
The first is a generalization of the diameter sphere theorem of Grove--Shiohama \cite{GS}.
This follows from the suspension theorem and the basic fact that a sphere is the only manifold that is topologically a suspension (cf.\ \cite[8.2]{LN:top}).
The second follows from the first, Property (2), and the following observation: under the assumption of (2), $\Sigma$ contains no extremal points.
Indeed, for any $x\in\Sigma$, two of the three $\pi/2$-discrete points must lie inside or outside of the $\pi/2$-neighborhood of $x$, and thus they make an obtuse angle at $x$.
Note that, as with Remark \ref{rem:opt}, the above assumptions are optimal.
\end{rem}

\section{Polyhedral case}\label{sec:poly}

Finally we prove Theorem \ref{thm:poly}.
We first introduce polyhedral spaces.
See \cite[Ch.\ 12]{AKP:found} (cf.\ \cite[\S3.2]{BBI}) for more details.
However, unlike \cite[12.1]{AKP:found}, we will only assume local compactness for polyhedral spaces instead of global compactness.

Let $\kappa\in\mathbb R$.
The \textit{model space} of curvature $\kappa$ refers to a sphere, Euclidean space, or a hyperbolic space of constant curvature $\kappa$.
A complete geodesic space $X$ is called a \textit{polyhedral space} if it admits a triangulation (i.e., a homeomorphism to a locally finite simplicial complex) such that each simplex in $X$ is isometric to a simplex in the model space of curvature $\kappa$, for some fixed $\kappa$.
The \textit{triangulation} of a polyhedral space means the triangulation in this definition.
The cases $\kappa=1,0,-1$ are called, respectively, a \textit{spherical, Euclidean, hyperbolic} polyhedral space.
The \textit{dimension} of a polyhedral space means the maximal dimension of a simplex (we only consider finite-dimensional polyhedral spaces).
A characterization of an Alexandrov metric on a polyhedral space can be found in \cite[12.5]{AKP:found} (cf.\ \cite[4.2.14]{BBI}).

We first recall the basic properties of general (i.e., not necessarily Alexandrov) polyhedral metrics.
Let $X$ be a polyhedral space.
For a simplex $\sigma$ in $X$, the \textit{link} around $\sigma$, denoted by $L(\sigma)$, is the subcomplex of $X$ formed by all the simplexes $\sigma'$ such that $\sigma$ and $\sigma'$ are disjoint but they are faces of a common simplex in $X$.
The link $L(\sigma)$ is identified with the set of normal directions to $\sigma$ at some interior point of $\sigma$.
Since the latter carries the angle metric, this makes $L(\sigma)$ a spherical polyhedral space.

Let $p\in X$.
The \textit{space of directions} $\Sigma_pX$ of $X$ at $p$ is defined in the same way as in Alexandrov geometry: the set of directions equipped with the angle metric (in this section the notation emphasizes the ambient space $X$).
Note that $\Sigma_pX$ is a spherical polyhedral space of lower dimension.
More precisely, if $p$ is contained in the interior of a $k$-dimensional simplex $\sigma$, then we have
\begin{equation}\label{eq:link}
\Sigma_pX=S^k(L(\sigma)),
\end{equation}
where $S^k(\cdot)$ denotes the $k$-fold spherical suspension.
Clearly a small neighborhood of $p$ is isometric to that of the vertex of the \textit{$\kappa$-cone} over $\Sigma_pX$ (see \cite[\S11.C]{AKP:found} or \cite[10.2.2]{BBI} for the definition of $\kappa$-cone).

By definition, $\Sigma$ is a \textit{$k$-th iterated space of directions} of $X$ at $p$ if there exists a sequence $p=\xi_1,\dots,\xi_k$ such that
\[\Sigma=\Sigma_{\xi_k}\cdots\Sigma_{\xi_1} X,\quad \xi_i\in\Sigma_{\xi_{i-1}}\cdots\Sigma_{\xi_1}X\]
for any $2\le i\le k$.
We denote this simply by
\[\Sigma=\Sigma^kX\]
without specifying the sequence $\xi_1,\dots,\xi_k$.

We say that a polyhedral space $X$ has \textit{spherical links} if the link around every simplex in $X$ is homeomorphic to a sphere.
If $X$ has spherical links, then $X$ is a topological manifold.
Furthermore, the link $L(\sigma)$ of a simplex $\sigma$ in $X$ also has spherical links.
The following lemma describes the relation between Conjecture \ref{conj:pl} and Theorem \ref{thm:poly}.

\begin{lem}\label{lem:pl}
Let $X$ be a polyhedral space.
Then $X$ has spherical links if and only if every iterated space of directions is homeomorphic to a sphere.
\end{lem}

\begin{proof}
The ``if'' part follows by taking iterated spaces of directions of both sides of Equation \eqref{eq:link}.
The ``only if'' part is proved by induction on dimension.
If $X$ has spherical links, then Equation \eqref{eq:link} shows that every space of directions is homeomorphic to a sphere.
Since the link also has spherical links, the inductive assumption and Equation \eqref{eq:link} show that every $k$-th iterated space of directions ($k\ge 2$) is homeomorphic to a sphere.
\end{proof}

Finally, we give a direct characterization of an extremal subset in a polyhedral Alexandrov space.
Let $X$ be a polyhedral Alexandrov space.
Then a subset $E$ of $X$ is extremal (in the sense of Definition \ref{dfn:ex}) if and only if
\begin{itemize}
\item $E$ is a subcomplex of $X$; and
\item for every simplex $\sigma$ in $E$, if $\xi\notin B(L_E(\sigma),\pi/2)$, then $L(\sigma)\subset\bar B(\xi,\pi/2)$.
\end{itemize}
Here $L_E(\sigma)$ denotes the link of $\sigma$ in the subcomplex $E$, which is a subcomplex of $L(\sigma)$.
The proof is straightforward and left to the reader (we will not use it).

We now prove Theorem \ref{thm:poly}.
We show the following more rigid result.
Compare with Theorem \ref{thm:link'} and Remark \ref{rem:hom}.

\begin{thm}\label{thm:poly'}
Let $X$ be a polyhedral Alexandrov homology $n$-manifold.
For any integer $1\le k\le n$, let $E_k$ be the set of points $p\in X$ such that some $k$-th iterated space of directions at $p$ is not homeomorphic to $S^{n-k}$.
If $E_k$ is nonempty, then it is an extremal subset of $X$ that is a subcomplex of $X$ only with $k$- or $(k-1)$-dimensional maximal simplexes.
\end{thm}

By Properties (1) and (4) in Section \ref{sec:link}, $E_k$ is empty for $k\ge n-2$, so we may assume $k\le n-3$.
Moreover, we have already proved in Theorem \ref{thm:link'} and Remark \ref{rem:hom} that $E_1$ is an extremal subset of dimension $\le 1$.
In the polyhedral case, we also see that $E_1$ is a subcomplex of $X$, since the space of directions is invariant in the interior of a simplex as in Equation \eqref{eq:link}.
Therefore we may assume $k\ge 2$ (and hence $n\ge 5$).

We prove Theorem \ref{thm:poly'} by induction on dimension (more precisely, on $k$).
Recall that, if $X$ is a polyhedral Alexandrov homology manifold, then $\Sigma_pX$ is again an Alexandrov polyhedral homology manifold with the homology of a sphere.
The key observation is the following.
Compare with Claim \ref{clm:link}.

\begin{clm}\label{clm:poly}
Let $X$ be as in Theorem \ref{thm:poly'} and $p\in X$.
Then we have
\[\Sigma_pE_k(X)=E_{k-1}(\Sigma_pX)\]
for any $2\le k\le n$, where $E_k(\cdot)$ denotes the singular set defined in Theorem \ref{thm:poly'}.
The equality remains valid even if either side is empty.
\end{clm}

\begin{rem}
Claim \ref{clm:link} showed that
\[\Sigma_pE_1(X)=E_0(\Sigma_pX),\]
where $E_0(\cdot)$ denotes the set of nonmanifold points.
The following proof of Claim \ref{clm:poly} also works for $k=1$, which is indeed a baby version of the proof of Claim \ref{clm:link}.
Theorem \ref{thm:poly'} also holds for $E_0$ (Property (2) in Section \ref{sec:link}). 
\end{rem}

\begin{proof}[Proof of Claim \ref{clm:poly}]
The proof only uses the properties of polyhedral spaces and Edwards' characterization of topological manifolds \cite[\S8]{E:icm} (cf.\ Properties (1)--(4) in Section \ref{sec:link}).
We do not rely on any properties of Alexandrov spaces.

Let $U$ be a neighborhood of $p$ that is isometric to a neighborhood of the vertex of the $\kappa$-cone over $\Sigma_pX$.
Let $x\in U$ be a point in the direction $\xi\in\Sigma_pX$.
Then we have
\begin{equation}\label{eq:poly1}
\xi\in\Sigma_pE_k(X)\iff x\in E_k(X)\iff\Sigma^{k-1}\Sigma_xX\ncong S^{n-k},
\end{equation}
where $\cong$ denotes a homeomorphism.

Since $\Sigma_xX=S(\Sigma_\xi\Sigma_pX)$, we have the following two possibilities:
\begin{equation*}
\Sigma^{k-1}\Sigma_xX=
\begin{cases}
\hfil\Sigma^{k-2}\Sigma_\xi\Sigma_pX\\
S(\Sigma^{k-1}\Sigma_\xi\Sigma_pX)
\end{cases}
\end{equation*}
(consider the $k=2$ case and apply it repeatedly).
In the first case, it is clear that $\Sigma^{k-1}\Sigma_xX$ is not homeomorphic to $S^{n-k}$ if and only if $\xi\in E_{k-1}(\Sigma_pX)$.
In the second case, $\Sigma^{k-1}\Sigma_xX$ is not homeomorphic to $S^{n-k}$ if and only if $\Sigma^{k-1}\Sigma_\xi\Sigma_pX$ is not simply-connected, by Edwards' theorem \cite[\S8]{E:icm} (or Property (3) in Section \ref{sec:link}).
The latter is equivalent to that $\Sigma^{k-2}\Sigma_\xi\Sigma_pX$ is not a manifold, i.e., $\xi\in E_{k-1}(\Sigma_pX)$.
Therefore, in either case,
\begin{equation}\label{eq:poly2}
\Sigma^{k-1}\Sigma_xX\ncong S^{n-k}\iff\xi\in E_{k-1}(\Sigma_pX).
\end{equation}
Combining \eqref{eq:poly1} and \eqref{eq:poly2}, we obtain the conclusion.
\end{proof}

\begin{proof}[Proof of Theorem \ref{thm:poly'}]
We prove this by induction on $k$.
The base case, $k=1$, was already proved in Theorem \ref{thm:link'} and Remark \ref{rem:hom}.
Suppose $k\ge 2$.

We first show that $E_k(X)$ is a subcomplex (in particular, a closed subset of $X$).
This follows from Claim \ref{clm:poly}.
If $p$ is contained in the closure of $E_k(X)$, then Claim \ref{clm:poly} implies that $E_{k-1}(\Sigma_pX)$ is nonempty.
This means that $p\in E_k(X)$, i.e., $E_k(X)$ is closed.
If the interior of a simplex $\sigma$ intersects $E_k(X)$, then clearly every point in the interior is contained in $E_k(X)$.
Since $E_k(X)$ is closed, the boundary of $\sigma$ is contained in $E_k(X)$, i.e., $E_k(X)$ is a subcomplex.

By the inductive assumption and Claim \ref{clm:poly}, for any $p\in E_k(X)$, the space of directions $\Sigma_pE_k(X)$ is an extremal subset of $\Sigma_pX$ that is a subcomplex of $\Sigma_pX$ only with $(k-1)$- or $(k-2)$-dimensional maximal simplexes.
This implies that $E_k(X)$ only have $k$- or $(k-1)$-dimensional maximal simplexes.
Furthermore, by Lemma \ref{lem:inf}, $E_k(X)$ is an extremal subset of $X$.
Note that the additional conditions in Lemma \ref{lem:inf} are satisfied.
Indeed, Claim \ref{clm:poly} implies that $\Sigma_pE_k(X)$ is nonempty (since $k\ge 2$), and if $\Sigma_pE_k(X)$ consists of one element (when $k=2$), the same argument as in Step 1 of the proof of Claim \ref{clm:link} shows that $\diam\Sigma_pE_k(X)\le\pi/2$.
This completes the proof.
\end{proof}

The following corollary immediately follows from Theorem \ref{thm:poly'} and Lemma \ref{lem:reg}.
Compare with Corollary \ref{cor:link}.

\begin{cor}\label{cor:poly}
Let $\Sigma$ be a polyhedral Alexandrov homology manifold with curvature $\ge 1$.
Suppose $\Sigma$ contains $k+3$ points with pairwise distance $>\pi/2$.
Then every $k$-th iterated space of directions is homeomorphic to a sphere.
\end{cor}

\begin{rem}\label{rem:conj}
The above proof of Theorem \ref{thm:poly'} (except for the part about polyhedral structures) only uses general properties of Alexandrov spaces.
Therefore the proof of Conjecture \ref{conj:pl} in the general case reduces to the proof of Claim \ref{clm:poly}.
Similarly, if we obtain Theorem \ref{thm:poly'} for nonpolyhedral spaces, then it immediately implies Corollary \ref{cor:poly} in the nonpolyhedral case.
\end{rem}

Finally, we remark that the absence of extremal subsets, as in Conjecture \ref{conj:pl}, is not inherited to the space of directions (if it were, the proof of Conjecture \ref{conj:pl} would be straightforward by induction on dimension).

\begin{ex}\label{ex:ab}
Let $P$ be a truncated cube obtained by cutting off all the corners of a solid cube by planes.
Let $p\in P$  a midpoint of an edge of the original cube.
Consider the double $D(P)$ of $P$, i.e., the gluing of two copies of $P$ along the boundary.
Then $D(P)$ contains no proper extremal subsets.
On the other hand, the space of directions $\Sigma_pD(P)$ is isometric to the double of the quadrant of the unit sphere, which contains a $0$-dimensional extremal subset.
A similar example can be obtained by taking the intersection of the solid cube in $\mathbb R^3$ with edge length $2$ and the concentric Euclidean ball of radius $\sqrt 2$ and doubling it.
In this case the resulting space has no proper extremal subsets even ``locally'' around the midpoint of the original edge.
\end{ex}

\appendix

\section{}\label{sec:app}

In this appendix, we give an example of a primitive extremal subset of codimension $2$ whose induced intrinsic metric is not an Alexandrov metric with a lower curvature bound.
The construction is very simple, but to the best of the author's knowledge, no such examples can be found in the literature.

Let us briefly review the history.
One of the long-standing problems in Alexandrov geometry is the \textit{boundary conjecture} --- the boundary of an Alexandrov space equipped with the induced intrinsic metric would have the same lower curvature bound as the ambient space.
This is true for a convex hypersurface in a Riemannian manifold (\cite{AKP:opt}), but is still open even for a convex hypersurface in a \textit{smoothable} Alexandrov space, i.e., a noncollapsing limit of Riemannian manifolds with sectional curvature bounded below (\cite[p.3871]{LP:curv}).
Recall that the boundary is a codimension $1$ extremal subset.

As a generalization of the boundary conjecture, Perelman--Petrunin \cite[6.1]{PP:ext} asked whether the induced intrinsic metric of a primitive extremal subset admits a lower curvature bound or not.
In \cite[\S1]{Pet:app}, Petrunin constructed an example of a primitive extremal subset of codimension $3$ (and thus $\ge 3$) that is not an Alexandrov space (cf.\ \cite[4.1]{Per:icm}).
However, other than the boundary case, the codimension $2$ case remained.
In \cite[9.1.2]{Pet:sc}, Petrunin conjectured that a primitive extremal subset of codimension $2$ might be an Alexandrov space.

The following example provides a negative answer to this conjecture.

\begin{ex}
Let $X$ be a convex subset of $\mathbb R^4$ defined by
\[X:=\left\{(x,y,z,w)\in\mathbb R^4\mid x\ge 0,\ y\ge 0,\ z\ge 0\right\}.\]
Suppose $\mathbb Z$ acts isometrically on $X$ by the cyclic permutation on the $(x,y,z)$-coordinate and by the translation on the $w$-coordinate.
More precisely, the action is generated by
\[\sigma:(x,y,z,w)\mapsto(y,z,x,w+1).\]
Then the quotient space $X/\mathbb Z$ is an Alexandrov space with nonnegative curvature.
Let $\pi:X\to X/\mathbb Z$ be the projection.

The $xw$-, $yw$-, and $zw$-planes are primitive extremal subsets of $X$.
In particular, their union $E$ is an extremal subset of $X$, but not primitive.
The projection $\pi(E)$ is an extremal subset of codimension $2$ in $X/\mathbb Z$ (\cite[4.1]{PP:ext}).
Furthermore, $\pi(E)$ is primitive because it can no longer be divided into each plane, due to the effect of the group action.
However, $\pi(E)$ does not admit any Alexandrov metric with a lower curvature bound as its intrinsic geodesics branch.
\end{ex}

An extremal subset is called \textit{minimal} if it contains no proper extremal subsets.
Clearly a minimal extremal subset is primitive.
Note that any minimal extremal subset must be a topological manifold, because the set of topological singularities, if nonempty, is a proper extremal subset.
In particular, the above counterexample is not minimal: the projection of the $w$-axis is a proper extremal subset of $\pi(E)$.

The following possibility still remains.

\begin{ques}
Is there a minimal extremal subset of codimension $2$ that is not an Alexandrov space with respect to the induced intrinsic metric?
\end{ques}

\begin{rem}
Petrunin's counterexample in \cite[\S1]{Pet:app} is also not minimal: the vertex of the cone is an extremal point.
However, it is possible to slightly modify it so that the vertex becomes a nonextremal point.
\end{rem}


\begin{thebibliography}{99}

\bibitem{A:conj}
S. Alesker, Some conjectures on intrinsic volumes of Riemannian manifolds and Alexandrov spaces, Arnold Math. J. 4 (2018), 1--17.

\bibitem{AKP:opt}
S. Alexander, V. Kapovitch and A. Petrunin, An optimal lower curvature bound for convex hypersurfaces in Riemannian manifolds, Illinois J. Math. 52 (2008), no. 3, 1031--1033.

\bibitem{AKP:found}
S. Alexander, V. Kapovitch, and A. Petrunin, Alexandrov geometry: foundations, Grad. Stud. Math., 236, Amer. Math. Soc., Providence, RI, 2024.

\bibitem{BDVW}
M. Bestvina, R. J. Daverman, G. A. Venema, and J. J. Walsh, A $4$-dimensional $1$-LCC shrinking theorem, Topology appl. 110 (2001), no. 1, 3--20.

\bibitem{BBI}
D. Burago, Y. Burago, and S. Ivanov, A course in metric geometry, Grad. Stud. Math., 33, Amer. Math. Soc., Providence, RI, 2001.

\bibitem{BGP}
Yu. Burago, M. Gromov, and G. Perel'man, A.D. Alexandrov spaces with curvature bounded below, Uspekhi Mat. Nauk 47 (1992), no. 2(284), 3--51, 222; translation in Russian Math. Surveys 47 (1992), no. 2, 1--58.

\bibitem{Ca}
J. W. Cannon, Shrinking cell-like decomposition of manifolds. Codimension three, Ann. of Math. 110 (1979), no. 1, 83--112.

\bibitem{CBL}
J. W. Cannon, J. L. Bryant, and R. C. Lacher, The structure of generalized manifolds having nonmanifold set of trivial dimension, Geometric topology (Athens, GA, 1977), 261--300, Academic Press, New York, 1979.

\bibitem{E:icm}
R. D. Edwards, The topology of manifolds and cell-like maps, Proc. ICM (Helsinki, 1978), 111--127, Acad. Sci. Fennica, Helsinki, 1980.

\bibitem{E:susp}
R. D. Edwards, Suspension of homology spheres, preprint, arXiv:06010573v1, 2006.

\bibitem{F:reg}
T. Fujioka, Regular points of extremal subsets in Alexandrov spaces, J. Math. Soc. Japan 74 (2022), no. 4, 1245--1268.

\bibitem{F:good}
T. Fujioka, Application of good coverings to collapsing Alexandrov spaces,  Pacific J. Math. 316 (2022), no. 2, 335--365.

\bibitem{F:eu}
T. Fujioka, Euler characteristics of collapsing Alexandrov spaces, Arnold Math. J. 10 (2024), no. 4, 463--472.

\bibitem{F:uni}
T. Fujioka, Uniform boundedness on extremal subsets in Alexandrov spaces, J. Geom. Anal. 35 (2025), no. 2, article 46.

\bibitem{GP:rad}
K. Grove and P. Petersen, A radius sphere theorem, Invent. Math. 112 (1993), 577--583.

\bibitem{GS}
K. Grove and K. Shiohama, A generalized sphere theorem, Ann. of Math. 106 (1977), 201--211.

\bibitem{GW:rad}
K. Grove and F. Wilhelm, Hard and soft packing radius theorems, Ann. of Math. 142 (1995), no. 2, 213--237.

\bibitem{GW:exo}
K. Grove and F. Wilhelm, Metric constraints on exotic spheres via Alexandrov geometry, J. Reine Angew. Math. 487 (1997), 201--217.

\bibitem{Ha}
M. Handel, A resolution of stratification conjectures concerning CS sets, Topology 17 (1978), no. 2, 167--175.

\bibitem{HS}
J. Harvey and C. Searle, Orientation and symmetries of Alexandrov spaces with applications in positive curvature, J. Geom. Anal. 27 (2017), no. 2, 1636--1666.

\bibitem{He}
J. P. Henderson, Cell-like mappings between CS sets, Proc. Amer. Math. Soc. 90 (1984), no. 3, 445--449.

\bibitem{K:reg}
V. Kapovitch, Regularity of limits of noncollapsing sequences of manifolds, Geom. Funct. Anal. 12 (2002), 121--137.

\bibitem{K:stab}
V. Kapovitch, Perelman's stability theorem, Metric and comparison geometry, 103--136, Surv. Differ. Geom., 11, Int. Press, Somerville, MA, 2007.

\bibitem{Kw}
K. W. Kwun, Uniqueness of the open cone neighborhood, Proc. Amer. Math. Soc. 15 (1964), no. 3, 476--479.

\bibitem{LP:curv}
N. Lebedeva and A. Petrunin, Curvature tensor of smoothable Alexandrov spaces, Geom. Topol. 28 (2024), no. 8, 3869–3907.

\bibitem{LN:top}
A. Lytchak and K. Nagano, Topological regularity of spaces with an upper curvature bound, J. Eur. Math. Soc. 24 (2022), no. 1, 137--165.

\bibitem{Ma}
C. Manolescu, Lectures on the triangulation conjecture, Proceedings of the G\"okova Geometry-Topology Conference 2015, 1--38, G\"okova Geometry/Topology Conference (GGT), G\"okova, 2016; corrected version in arXiv:1607.08163v3.

\bibitem{MY:stab}
A. Mitsuishi and T. Yamaguchi, Stability of strongly Lipschitz contractible balls in Alexandrov spaces, Math. Z. 277 (2014), 995--1009; erratum in preparation.

\bibitem{Per:alex}
G. Perelman, Alexandrov's spaces with curvatures bounded from below II, preprint, 1991.

\bibitem{Per:mor}
G. Ya. Perel'man, Elements of Morse theory on Aleksandrov spaces, Algebra i Analiz 5 (1993), no. 1, 232--241; translation in St. Petersburg Math. J. 5 (1994), no. 1, 205--213.

\bibitem{Per:icm}
G. Perelman, Spaces with curvature bounded below, Proc. ICM (Z\"urich, 1994), 517--525, Birkh\"auser, Basel, 1995.

\bibitem{Per:col}
G. Perelman, Collapsing with no proper extremal subsets, Comparison geometry, 149--154, Math. Sci. Res. Inst. Publ., 30, Cambridge Univ. Press, Cambridge, 1997.

\bibitem{PP:ext}
G. Ya. Perel'man and A. M. Petrunin, Extremal subsets in Aleksandrov spaces and the generalized Liberman theorem, Algebra i Analiz 5 (1993), no. 1, 242--256; translation in St. Petersburg Math. J. 5 (1994), no. 1, 215--227.

\bibitem{Pet:app}
A. Petrunin, Application of quasigeodesics and gradient curves, Comparison geometry, 203--219, Math. Sci. Res. Inst. Publ., 30, Cambridge Univ. Press, Cambridge, 1997.

\bibitem{Pet:sc}
A. Petrunin, Semiconcave functions in Alexandrov's geometry, Metric and comparison geometry, 137--201, Surv. Differ. Geom., 11, Int. Press, Somerville, MA, 2007.

\bibitem{Pl}
C. Plaut, Metric spaces of curvature $\le k$, Handbook of geometric topology, 819--898, North-Holland, Amsterdam, 2002.

\bibitem{Q:obs}
F. Quinn, An obstruction to the resolution of homology manifolds, Michigan Math. J. 34 (1987), no. 2, 285--291.

\bibitem{Q:prob}
F. Quinn, Problems on homology manifolds, Exotic homology manifolds (Oberwolfach, 2003), 87--103, Geom. Topol. Monogr., 9, Geom. Topol. Publ., Coventry, 2006.

\bibitem{Si}
L. C. Siebenmann, Deformation of homeomorphisms on stratified sets, Comment. Math. Helv. 47 (1972), 123--163.

\bibitem{W:alex}
J.-Y. Wu, Topological regularity theorems for Alexandrov spaces, J. Math. Soc. Japan 49 (1997), no. 4, 741--757.

\bibitem{W:ed}
J.-Y. Wu, A generalization of a theorem of Edwards, Proc. Amer. Math. Soc. 127 (1999), no. 10, 3119--3123.

\bibitem{Y:ess}
T. Yamaguchi, Collapsing and essential coverings, preprint, arXiv:1205.0441v1, 2012.

\end{thebibliography}
\end{document}